\newtheorem{theorem}{Theorem}[section]
\newtheorem{Lemma}[theorem]{Lemma}
\theoremstyle{definition}
\newtheorem{Definition}[theorem]{Definition}
\newtheorem{Theorem}[theorem]{Theorem}
\newtheorem{Remark}[theorem]{Remark}
\newtheorem{Proposition}[theorem]{Proposition}
\newtheorem{Corollary}[theorem]{Corollary}
\newtheorem{example}[theorem]{Example}
\def\RR{\mathbb{R}}
\def\EE{\mathbb{E}}
\def\CC{\mathbb{C}}
\theoremstyle{remark}
\numberwithin{equation}{section}
\begin{document}

\title{Fractional diffusion in Gaussian noisy environment}

\author{Guannan Hu}
\address{Department of Mathematics,  University of Kansas,  1460 Jayhawk Blvd,  Lawrence,  Kansas 66045-7594,  USA}
\email{g040h641@ku.edu}

\author{Yaozhong Hu}
\address{Department of Mathematics,  University of Kansas,  1460 Jayhawk Blvd,  Lawrence,  Kansas 66045-7594,  USA}
\email{yhu@ku.edu}
\thanks{}

\subjclass[2000]{26A33; 60H15; 60H05; 35K40; 35R60}



\keywords{Fractional derivative,   fractional order stochastic heat equation, mild solution,
time homogeneous fractional Gaussian noise,
stochastic integral of It\^{o} type,
multiple integral of It\^{o} type,  chaos expansion,  Fox's {\sl H}-function,  Green's functions.}

\begin{abstract}
We study the
fractional diffusion in a Gaussian noisy environment
as described by the fractional order stochastic
partial equations of the following form:
$D_t^\alpha u(t, x)=\textit{B}u+u\cdot W^H$,
where $D_t^\alpha$ is the fractional derivative
of order $\alpha$ with respect to the time variable
$t$,   $\textit{B}$ is a second order   elliptic operator with respect
to the space variable $x\in\mathbb{R}^d$,  and $W^H$ a fractional Gaussian noise
of Hurst parameter $H=(H_1,  \cdots,  H_d)$.  We obtain conditions satisfied by
 $\alpha$ and  $H$ so that the square integrable
 solution $u$ exists uniquely .
\end{abstract}

\maketitle
\section{Introduction}

In recent years there have been a great amount of works on anomalous  diffusions
in the study of biophysics    and so on (see for example \cite{BIKMSBG}, \cite{phy1},
\cite{phy2}, \cite{phy3}   to mention  just a few).   In mathematics, some of these anomalous
diffusions (such as sub-diffusions)  can be described by the so-called
fractional order diffusion processes. As for the term  ``fractional order diffusion",  one has to distinguish two completely different types. One is the equation
of the form $\partial_t u(t,x)=-(-\Delta)^{\alpha} u(t,x)$,  where
$t\ge 0$, $x\in \RR^d$, $\alpha\in (0, 2)$ is a positive number,
 $\partial_t=\frac{\partial}{\partial t}$
and $\Delta =\sum_{i=1}^d \partial _{x_i}^2$ is the Laplacian.
This equation is not associated with the anomalous diffusion. Instead, it is
associated with the so-called stable process (or in general L\'evy  process),
 which has jumps.  Another equation is of the form
 $D^{(\alpha)}_tu(t, x) = \Delta u(t, x)$,  where
 $ D^{(\alpha)}_t$ is the  Caputo fractional derivative with respect to $t$
 (see \cite{skm} for the study  of various fractional
 derivatives).
 This equation is relevant to the anomalous diffusion which  we mentioned
 and has been studied by a number of researchers,
 see for example \cite{ref-journal1} and the references therein.
 If one considers the anomalous diffusion in a random environment, then
 it is naturally led to the study of a fractional order stochastic partial differential
 equation of the form $D^{(\alpha)}_tu(t, x) = B u(t, x)+u(t,x)\dot W(t,x)$,  where
 $B$ is a second order differential operator, including the Laplacian as a special
 example,  and $\dot W$  is a noise.  In this paper, we shall study this
 fractional order stochastic partial differential
 equation when $\dot W(t,x)=\dot W^H(x)$ is a time homogeneous
 fractional Gaussian noise of Hurst parameter $H=(H_1, \cdots, H_d)$.
 Mainly, we shall find a  relation between $\alpha$ and $H$ such that
 the solution to the above equation has a unique square integrable solution.

If $\alpha$ is formally set to $1$, then the above stochastic partial differential
equation has been studied in \cite{ref-journal2}. So, our work can be considered as
an  extension of the work \cite{ref-journal2} to the case of
fractional diffusion (in Gaussian noisy environment).   Let us also mention
that when we formally set $\alpha=1$, we recover one of the main results
in \cite{ref-journal2}.  Thus,  our condition
\eqref{e.cond.main}  given below    is also optimal.

Here is the organization of the paper, Section 2 will describe
the operator $B$ and the noise $W^H$ and state the main result
of the paper.  In our proof we need to use the properties of the two fundamental
solutions (Green's functions) $Z(t,x,\xi)$ and $Y(t,x,\xi)$ associated with the equation
$D^{(\alpha)}_tu(t, x) = B u(t, x)$, which is represented by the Fox's {\sl H}-function .
  We shall recall some most relevant
  results on  the   {\sl H}-function  and  the Green's function $Z(t,x,\xi)$
  and $Y(t,x,\xi)$ in Section 3.  A number of preparatory lemmas are
   needed to prove main results and they are presented in Section 4.
Finally, the last section  is devoted to the proof of our main theorem.

\section{Main result}

Let
\[
\textit{B}=\sum^d_{i, j=1}a_{i, j}(x)\frac{\partial^2}{\partial x_i\partial x_j}+\sum^d_{j=1}b_j(x)\frac{\partial}{\partial x_j}+c(x)
\]
be  a uniformly elliptic second-order differential operator with bounded continuous real-valued coefficients.
Let $u_0$ be a given  bounded continuous function (locally H\"{o}lder continuous if $d>1$).   Let $\{ W^H(x)\,,
x\in \RR^d\} $
be a time homogeneous (time-independent) fractional Brownian  field on some probability space $(\Omega, \mathcal{F} \,, P)$ (Like elsewhere in probability theory, we omit the dependence of
$W^H(x)=W^H(x,\omega)$ on $\omega\in \Omega$). Namely, the stochastic process
$\{ W^H(x)\,,
x\in \RR^d\} $ is a (multi-parameter) Gaussian process with mean $0$ and its covariance is given by
\begin{equation}
\EE \left( W^H(x) W^H(y)\right)=  \prod_{i=1}^d R_{H_i}(x_i, y_i)\,,
\end{equation}
where $  H_1, \cdots, H_d$ are some real numbers in the interval $(0, 1)$.
Due to some technical difficulty, we assume that $H_i>1/2$ for all $i=  1, 2, \cdots, d$; the symbol
$\EE$ denotes the expectation on $(\Omega, \mathcal{F} \,, P)$ and
\[
R_{H_i} (x_i,y_i)=\frac12 \left(|x_i|^{2H_i}+|y_i|^{2H_i}-|x_i-y_i|^{2H_i}\right) \,,\quad \forall \
x_i,y_i\in \RR
\]
is the covariance function of a fractional Brownian motion of Hurst parameter $H_i$.

Throughout this paper
we fix an arbitrary parameter $\alpha \in (0, 1)$ and a finite time horizon $T\in (0, \infty)$.
We   study the following stochastic partial differential
equation of fractional order:
\begin{equation}\label{1.4}
\begin{cases}
D^{(\alpha)}_tu(t, x)
= \textit{B}u(t, x) + u(t, x) \cdot \dot W^H( x),  \qquad t\in (0, T],  \quad  x\in\mathbb{R}^d\, ;\\
  u(0, x)= u_0(x)\,,
\end{cases}
\end{equation}
where
\[
D^{(\alpha)}_tu(t, x)=\frac{1}{\gamma(1-\alpha)}\bigg[\frac{\partial}{\partial t}\int^t_0(t-\tau)^{-\alpha}u(\tau, x) d\tau - t^{-\alpha}u(0, x)\bigg]\quad
\]
 is the  Caputo fractional derivative (see e.g. \cite{skm}) and $\dot W^H(x) =\frac{\partial ^{d }}{ \partial x_1\cdots\partial x_d} W^H(x)$ is the distributional derivative (generalized derivative) of $W^H$, called fractional Brownian noise.

Our objective is to obtain condition  on $\alpha$ and $H$ such that the above
equation has a unique solution.  But since $W^H$ is not differentiable or since $\dot W^H(x)$ does
not exist as an ordinary function,  we have to describe under what sense   a random
field $\left\{ u(t,x)\,, t\ge 0\,, x\in \RR^d\right\}$ is a solution to the above equation
(\ref{1.4}).

To motivate our definition of the solution, let us consider the following (deterministic) partial
differential equation of fractional order with the term $u(t, x) \cdot \dot W^H(x)$
in (\ref{1.4}) replaced by $f(t,x)$:
\begin{equation}
\label{1.1}
\begin{cases}
D^{(\alpha)}_t\tilde u(t, x)
= \textit{B}\tilde u(t, x) + f(t,x),  \qquad t\in (0, T],  \quad  x\in\mathbb{R}^d\, ;\\
 \tilde  u(0, x)= u_0(x)\,,
\end{cases}
\end{equation}
where the function $f$ is bounded and jointly continuous in $(t, x)$ and locally H\"{o}lder continuous in x.
In   \cite{ref-journal1},  it is proved that there are two Green's functions
$$\left\{ Z(t , x,  \xi) \,,\  Y(t , x,  \xi)\,,  0 <t\le T\,,
 x, \xi\in \RR^d\right\}$$  such that the solution to the  Cauchy problem   \eqref{1.1} is
 given by
\begin{equation}
\label{1.2} \tilde
u(t, x)=\int_{\mathbb{R}^d} Z(t, x,  \xi)u_0(\xi)d\xi+\int_0^t  ds \int_{\mathbb{R}^d} Y(t-s, x, y)f(s, y) dy.
\end{equation}
In general, there is no  explicit form for  the two Green's functions
$\left\{ Z(t , x,  \xi) \,,\  Y(t, x,  \xi) \right\}$. However, their constructions and
properties are known (see  \cite{ref-journal1}, \cite{ref-journal3}, \cite{ref-journal4}, and the references
therein). We shall recall some  needed results   in the next section.

From the classical solution expression   (\ref{1.2}),   we expect that the solution
$u(t,x)$ to (\ref{1.4})  satisfies formally
\[
u(t, x)=\int_{\mathbb{R}^d} Z(t, x,  \xi)u_0(\xi)d\xi+\int_0^t  ds \int_{\mathbb{R}^d} Y(t-s, x, y)u(s, y) \dot W^H(  y) dy\,.
\]
   The above formal integral
$ \int_0^t  ds \int_{\mathbb{R}^d} Y(t-s, x, y)u(s, y) \dot W^H(  y) dy$ can be defined by It\^o-Skorohod stochastic
integral $\int_{\RR^d}\left[\int_0^t    Y(t-s, x, y) u(s, y) ds\right] W^H(  dy)  $  as given  in
\cite{ref-journal2}.

Now,   we can give the following definition.

\begin{Definition}\label{d.mild}  A random field $\left\{u(t,x)\,,  0\le t\le T\,,
 x \in \RR^d\right\}$ is called a  mild  solution to the equation
 (\ref{1.4})   if
\begin{enumerate}
  \item\ $u(t, x)$ is jointly measurable in $t\in [0, T]$ and $x\in\RR^d$;
  \item\  $\forall (t, x)\in [0, T] \times \mathbb{R}^d$,  $\int_{0}^t\int_{\mathbb{R}^d}Y(t-s, x, y)u(s, y)ds W^H(dy) $ is well defined in $\mathcal{L}^2$;
  \item\  The following holds in $\mathcal{L}^2$
  \begin{equation}\label{e.2.5}
 u(t, x)=\int_{\mathbb{R}^d}Z(t, x,\xi)u_0(\xi)d\xi+\int_{0}^t\int_{\mathbb{R}^d}Y(t-s, x, y)u(s, y)W^H(dy)d s.
  \end{equation}
\end{enumerate}
\end{Definition}

Let us return to the discussion of the two Green's functions
$\left\{ Z(t , x,  \xi) \,,\  Y(t , x,  \xi) \right\}$.
If $\alpha=1$, namely, if
 $D^{(\alpha)}_t$ in (\ref{1.1})   is replaced by $\partial _t$ and $
 \displaystyle B= \Delta:=  \sum_{i=1}^d \partial _{x_i}^2$,
 then
\begin{equation}
 Z(t , x,  \xi) = Y(t , x,  \xi)=
    \left(4\pi t\right)^{-d/2} \exp\left\{ -\frac{|x-\xi|^2}{
4t}\right\}\,.\label{e.zandy.1}
\end{equation}
In this case the stochastic partial differential equation
of the form
\begin{equation}\label{1.3}
 \frac{\partial u(t, x)}{\partial t}= \Delta u(t, x) + u \cdot W^H(x), \qquad x\in\mathbb{R}^d,
 \end{equation}
 was studied in \cite{ref-journal2}.  The mild solution to the above equation
 (\ref{1.3}) is proved to exist uniquely  under conditions
\begin{equation}
H_i>1/2\,,\quad i=1, \cdots, d\qquad {\rm and}\qquad \sum_{i=1}^d H_i>d-1\,.\label{e.cond.classical}
\end{equation}

 The main result of this paper is to extend the above result in \cite{ref-journal2}
 to our equation (\ref{1.4}).
\begin{Theorem}\label{main}   Let the coefficients $ a_{ij}(x)$, $b_i(x) \,, i,j=1, \cdots, d\,, $ be bounded and
continuous   and let them be H\"older continuous with exponent $\gamma$.
  Let $ a_{ij}(x)$ be uniformly elliptic.  Namely, there is a
constant $a_0\in (0, \infty)$ such that
\[
\sum_{i,j=1}^d a_{ij}(x) \xi_i\xi_j\ge a_0 |\xi|^2 \quad \forall \ \ \xi=(\xi_1, \cdots, \xi_d)\in \RR^d\,.
\]
Let $u_0$ be  a bounded continuous  (and
locally H\"{o}lder continuous if $d>1$).\   Assume
\begin{equation}
H_i>\begin{cases}
\frac12  &\qquad \hbox{if}\  \ d=1, 2, 3, 4\\
1-\frac{2}{d}-\frac{\gamma}{2d}&\qquad \hbox{if}\ \ d\ge 5 \\
\end{cases}
\label{e.cond.main-hi}
 \end{equation}
 and
 \begin{equation}
 \sum_{i=1}^{d}H_i>d-2+\frac1\alpha \,.
\label{e.cond.main}
 \end{equation}
Then, the mild solution to (\ref{1.4}) exists uniquely in $L^2(\Omega, \mathcal{F}, P)$.
\end{Theorem}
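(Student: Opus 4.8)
The plan is to construct the solution via the standard Picard iteration in the Wiener chaos, following the blueprint of \cite{ref-journal2} but replacing the Gaussian heat kernel by the Green's function $Y(t,x,\xi)$ for $D^{(\alpha)}_t - B$. First I would set $u_0(t,x) = \int_{\RR^d} Z(t,x,\xi)u_0(\xi)\,d\xi$ (this is bounded and continuous by the properties of $Z$), and then define recursively
\[
u_{n+1}(t,x) = u_0(t,x) + \int_0^t\!\!\int_{\RR^d} Y(t-s,x,y)\,u_n(s,y)\,W^H(dy)\,ds.
\]
Since $W^H$ is a Gaussian noise with $H_i>1/2$, the Itô--Skorohod integral and the chaos expansion machinery apply: each $u_n(t,x)$ lives in a finite sum of Wiener chaoses, and iterating produces a formal series $u(t,x) = \sum_{n\ge 0} I_n(f_n(\cdot,t,x))$, where $I_n$ is the $n$-th multiple integral and the kernel $f_n$ is the symmetrization of
\[
f_n(y_1,\dots,y_n;t,x) = \int_{0<s_1<\cdots<s_n<t} Y(t-s_n,x,y_n)Y(s_n-s_{n-1},y_n,y_{n-1})\cdots Y(s_2-s_1,y_2,y_1)\,u_0(s_1,y_1)\,ds_1\cdots ds_n.
\]
The goal is to show $\sum_n n!\,\|\tilde f_n\|_{\mathcal H^{\otimes n}}^2 < \infty$, where $\mathcal H$ is the Hilbert space associated to the covariance $\prod_i R_{H_i}$; this gives $u(t,x)\in L^2(\Omega)$, and a Gronwall-type argument then yields uniqueness.

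The heart of the matter is the estimate of the Hilbert-space norm $n!\,\|\tilde f_n\|_{\mathcal H^{\otimes n}}^2$. Because $H_i>1/2$, the inner product in $\mathcal H^{\otimes n}$ is an explicit $2n$-fold integral against the kernel $\prod_{j=1}^n \prod_{i=1}^d H_i(2H_i-1)|y_j^{(i)}-z_j^{(i)}|^{2H_i-2}$. I would not try to symmetrize explicitly; instead I would bound $\|\tilde f_n\|^2 \le \|f_n\|^2$ is false in general, so rather use the standard trick of expanding the norm of the symmetrization as a sum over permutations and, using the semigroup/Chapman--Kolmogorov structure of $Y$ together with the positivity of the Riesz-type kernels, reduce everything to iterated applications of a single-step bound. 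The key preparatory lemmas from Section 4 (which I am allowed to invoke) should provide exactly the right space-time estimate on $Y$: roughly, bounds of the form $\int_{\RR^{2d}} |y-z|^{\sum(2H_i-2)}\,g(y)g(z)\,dy\,dz \lesssim (\text{power of }t) \cdot \|g\|^2$ after integrating the time variables, where the power of $t$ is summable in $n$ precisely when conditions \eqref{e.cond.main-hi} and \eqref{e.cond.main} hold. Tracking the exponents, each convolution step with $Y(s_{j+1}-s_j,\cdot,\cdot)$ contributes a factor behaving like $(s_{j+1}-s_j)^{\beta}$ with $\beta$ determined by $\alpha$, $d$, and $\sum H_i$; the product of these over the simplex $0<s_1<\cdots<s_n<t$ is handled by the Beta-function identity $\int_{\Delta_n(t)} \prod (s_{j+1}-s_j)^{\beta}\,ds = \frac{\Gamma(\beta+1)^n}{\Gamma(n(\beta+1)+1)}t^{n(\beta+1)}$, and the $n!$ is absorbed since $\Gamma(n(\beta+1)+1)$ grows faster than $n!$ exactly when $\beta+1>0$, i.e. when $\beta>-1$.

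Concretely, the condition $\beta>-1$ is what the spatial integrability forces: the Green's function $Y(t,x,\xi)$ for the Caputo-$\alpha$ equation behaves (via its Fox $H$-function representation, recalled in Section 3) like $t^{-\alpha d/2}$ times a function of $|x-\xi|/t^{\alpha/2}$, so a single spatial convolution against the kernel $|y-z|^{\sum 2H_i - 2d}$ scales as $t^{\alpha(\sum H_i - d)}$ near the diagonal; combined with the extra $ds$ integration this gives $\beta = \alpha(\sum H_i - d + 1) - 1 + (\text{correction from the } ds)$, and chasing the bookkeeping yields precisely $\sum_i H_i > d - 2 + 1/\alpha$, which is \eqref{e.cond.main}. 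The dimension-dependent restriction \eqref{e.cond.main-hi} on each individual $H_i$ is needed separately to guarantee that the one-step spatial integral converges at all (the singularity $|y-z|^{2H_i-2}$ must be integrable against the square of $Y$, which has only limited regularity when $d\ge5$ — here the H\"older exponent $\gamma$ enters through the Schauder-type estimates on $Y$ from Section 3). I expect the main obstacle to be the careful combinatorial/analytic control of the symmetrized kernel together with the sharp tracking of the time exponent $\beta$ through $n$ convolution steps, since a lossy bound there would give a suboptimal condition on $\alpha$ and $H$ rather than the stated optimal one. Once the $L^2$ bound on $\sum_n I_n(\tilde f_n)$ is in hand, verifying that the resulting $u$ satisfies Definition \ref{d.mild} (measurability, the integral being well defined in $\mathcal L^2$, and the mild-form identity \eqref{e.2.5}) is routine, and uniqueness follows because any two mild solutions have the same chaos kernels $f_n$ by induction on $n$.
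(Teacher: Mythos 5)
Your overall strategy --- chaos expansion with kernels $f_n$ built from iterated $Y$-convolutions, the estimate of $n!\,\|\tilde f_n\|^2_{\mathcal H^{\otimes n}}$ via the Section~4 lemmas, and the Beta-function identity on the time simplex --- is exactly the paper's route. However, the step where you extract the condition on $\alpha$ and $H$ would fail as stated. You claim the $n!$ is absorbed because ``$\Gamma(n(\beta+1)+1)$ grows faster than $n!$ exactly when $\beta+1>0$''; in fact $\Gamma(n\epsilon+1)\sim(n\epsilon)^{n\epsilon}e^{-n\epsilon}$ beats $n!\sim n^{n}e^{-n}$ only when $\epsilon\ge 1$, and moreover the quantity to control is $n!$ times the \emph{square} of the simplex integral, because the $\mathcal H^{\otimes n}$-inner product doubles the time variables into $\mathbf{s}$ and $\mathbf{r}$. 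The correct requirement is that $n!\,C^n\,\Gamma(n(\ell+1)+1)^{-2}$ be summable, i.e. $2(\ell+1)>1$, which with $\ell=-1+\tfrac{\alpha}{2}\bigl(\sum_iH_i-d+2\bigr)$ (the exponent produced by one space-time convolution step in each of the two chains) gives precisely $\sum_iH_i>d-2+1/\alpha$. Your criterion $\beta>-1$ would instead yield the strictly weaker, and false, condition $\sum_iH_i>d-2$; so although you quote the right final inequality, the mechanism you describe does not produce it.

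Two smaller points. First, $\|\tilde f_n\|_{\mathcal H^{\otimes n}}\le\|f_n\|_{\mathcal H^{\otimes n}}$ is in fact \emph{true} (triangle inequality plus permutation invariance of the norm), and it is exactly what the paper uses to bypass the permutation sum; your detour through ``expanding the symmetrization over permutations'' is unnecessary and, once you control the cross terms by Cauchy--Schwarz, circles back to the same bound. Second, the Green's function $Y$ of the Caputo equation does \emph{not} satisfy a Chapman--Kolmogorov identity (the fractional-in-time dynamics is non-Markovian), so you cannot lean on a semigroup structure to iterate the one-step bound; the paper instead dominates $|Y(t,x,\xi)|$ pointwise by a product $\prod_i t^{\zeta_d/d}|x_i-\xi_i|^{\kappa_d/d}p(t,x_i-\xi_i)$ of one-variable functions (Lemma \ref{Corollary7}) and evaluates the $2n$-fold spatial integral coordinate by coordinate via Lemmas \ref{lemma11}--\ref{fromgr}. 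That factorization is also where the individual condition \eqref{e.cond.main-hi} on each $H_i$ actually enters: it is needed so that $2H_i-2+2\kappa_d/d>-2$, which is the integrability hypothesis of Corollary \ref{c.14}.
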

\begin{Remark} (i)\ When $\alpha$ is formally set to $1$, the condition
\eqref{e.cond.main} is the same as the condition \eqref{e.cond.classical}
given in \cite{ref-journal2}. So, in some sense
our condition \eqref{e.cond.main}  is optimal.

(ii) Since $H_i<1$ for all $i=1,2, \cdots, d$ the condition is possible only when $\alpha >1/2$.
\end{Remark}


\section{Green's functions
$Z$ and $Y$}\label{s.fund}


\subsection{Fox's {\sl H}-function }

We  shall use  {\sl H}-function  to  express the Green's functions $Z$ and $Y$ in  Definition \ref{d.mild}.   In this subsection we recall some results about the {\sl H}-function s and the two  Green's functions.  We shall follow the presentation in \cite{ref-book1}
(see also \cite{ref-journal1} and references therein).

\begin{Definition}
Let  $m, n, p, q$ be integers such that  $0\leq m\leq q,  0\leq n\leq p$.  Let
  $a_i,  b_i\in \CC$ be complex numbers and
  let $\alpha_j, \beta_j$ be positive numbers,  $i=1, 2, \cdots ,  p; j=1, 2, \cdots  ,  q$.
  Let the   set of poles of the gamma functions $\Gamma(b_j+\beta_js)$ doesn't intersect with that of the gamma functions $\Gamma(1-a_i-\alpha_is)$,
namely,
\[
\bigg\{b_{jl}=\frac{-b_j-l}{\beta_j},  l =0, 1, \cdots\bigg\}\bigcap \bigg\{a_{ik}=\frac{1-a_i+k}{\alpha_i},  k=0, 1, \cdots\bigg\}=\emptyset\
\]
for all $i=1, 2, \cdots ,  p$ and $ j=1, 2, \cdots ,  q$.
The {\sl H}-function
\[
H^{mn}_{pq}(z)\equiv H^{mn}_{pq}\bigg[z \bigg|\begin{array}{ccc}
                                                                        (a_1, \alpha_1) & \cdots & (a_p, \alpha_p)\\
                                                                        (b_1, \beta_1) & \cdots & (b_q, \beta_q)
                                                                      \end{array} \bigg]
\]
is defined by   the following integral
\begin{equation}\label{2.1}
H^{mn}_{pq}(z)=\frac{1}{2\pi i}\int_L\frac{\prod_{j=1}^m \Gamma(b_j+\beta_js)\prod_{i=1}^n\Gamma(1-a_i-\alpha_is)}{\prod_{i=n+1}^p\Gamma(a_j+\alpha_is)\prod_{j=m+1}^q \Gamma(1- b_j-\beta_js)}z^{-s} ds\,, \ \ z\in \CC\,,
\end{equation}
where an empty product in \eqref{2.1}  means  $1$  and
$L$ in \eqref{2.1} is the infinite contour which separates all the points  $b_{jl}$ to the left and all the points
 $a_{ik}$ to the right of $L$.  Moreover, $L$  has one of the following forms:
\begin{enumerate}
          \item $L=L_{-\infty}$ is a left loop situated in a horizontal strip starting at point $-\infty+i\phi_1$ and terminating at point $-\infty+i\phi_2$ for some   $-\infty<\phi_1< \phi_2<\infty$
          \item $L=L_{+\infty}$ is a right loop situated in a horizontal strip starting at point $+\infty+i\phi_1$ and terminating at point $\infty+i\phi_2$ for some  $-\infty<\phi_1< \phi_2<\infty$
          \item $L=L_{i\gamma\infty}$ is a contour starting at point $\gamma-i\infty$ and terminating at point $\gamma+i\infty$ for some  $\gamma\in(-\infty,  \infty)$
        \end{enumerate}

\end{Definition}
To illustrate $L$ we give the following graphs.
\begin{table}[htbp]
\begin{tabular}{ccc}
\includegraphics[width=2.3in]{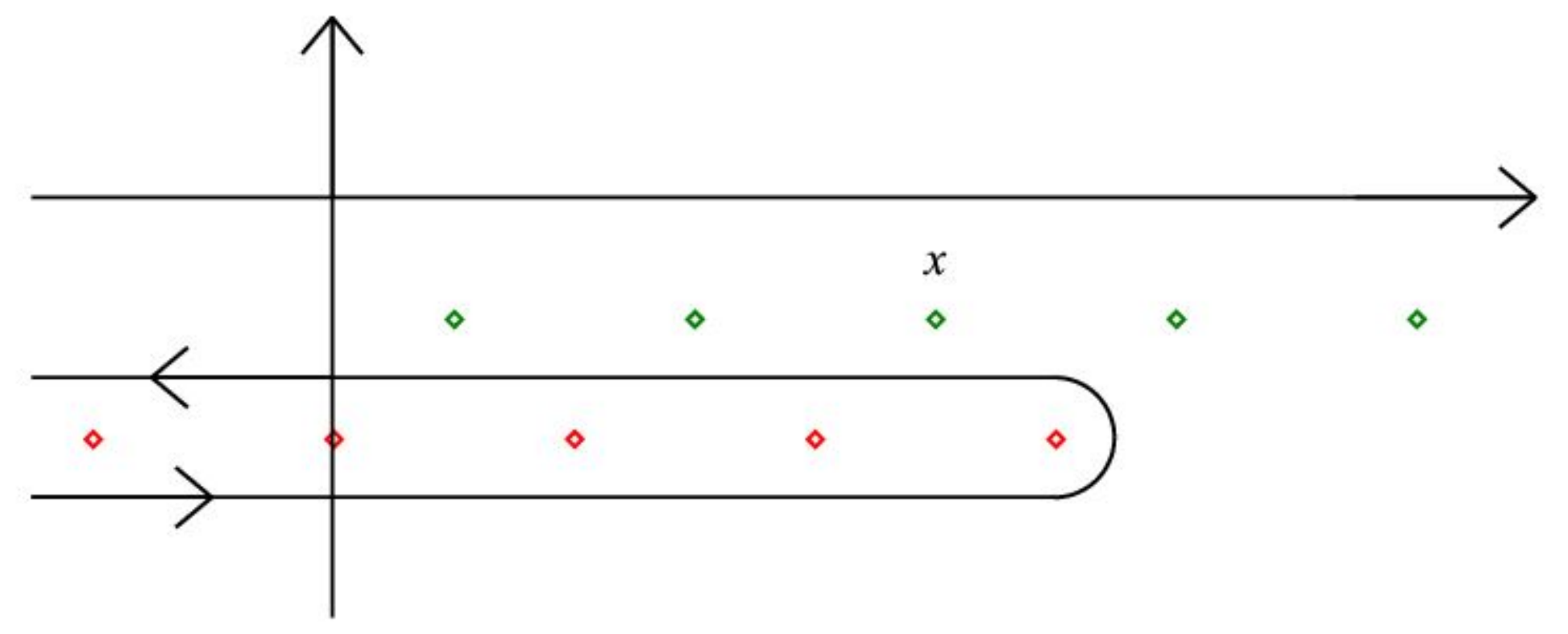}& \includegraphics[width=2.3in]{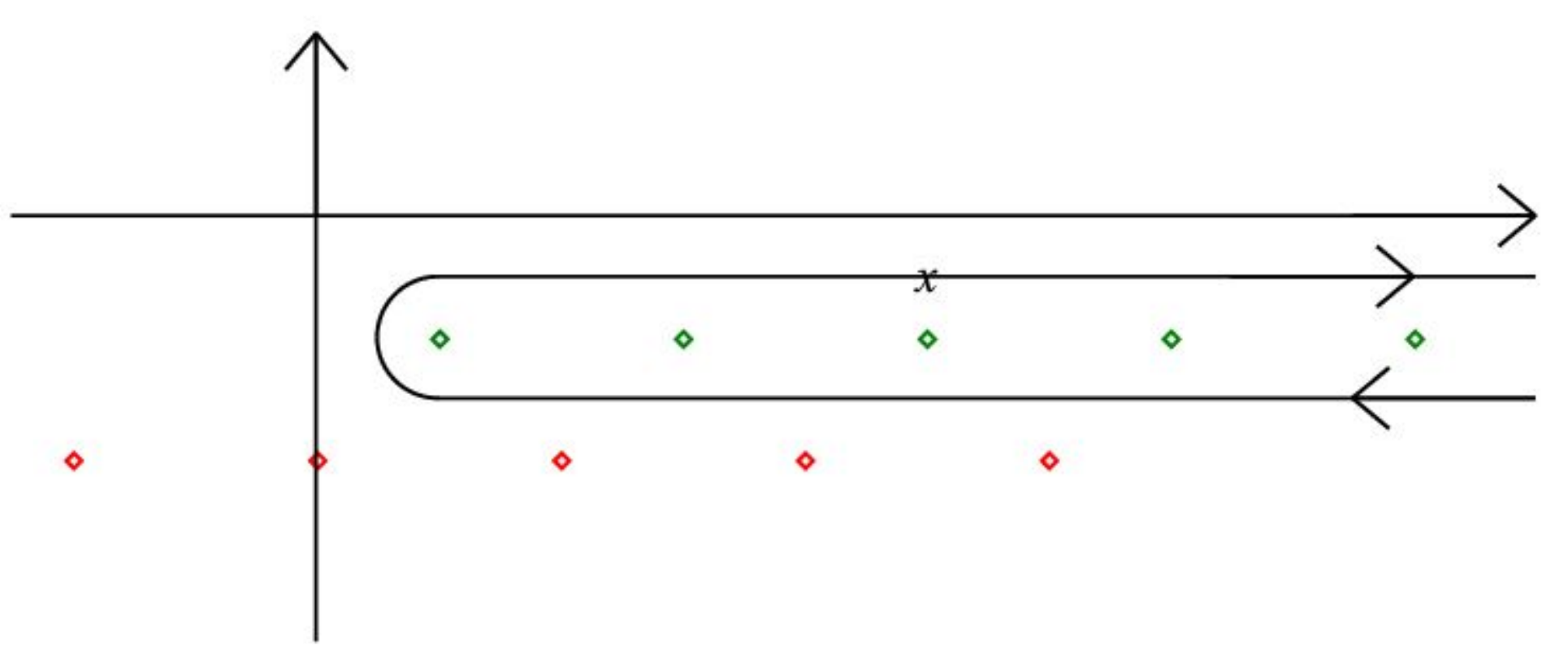}& \includegraphics[width=2.3in]{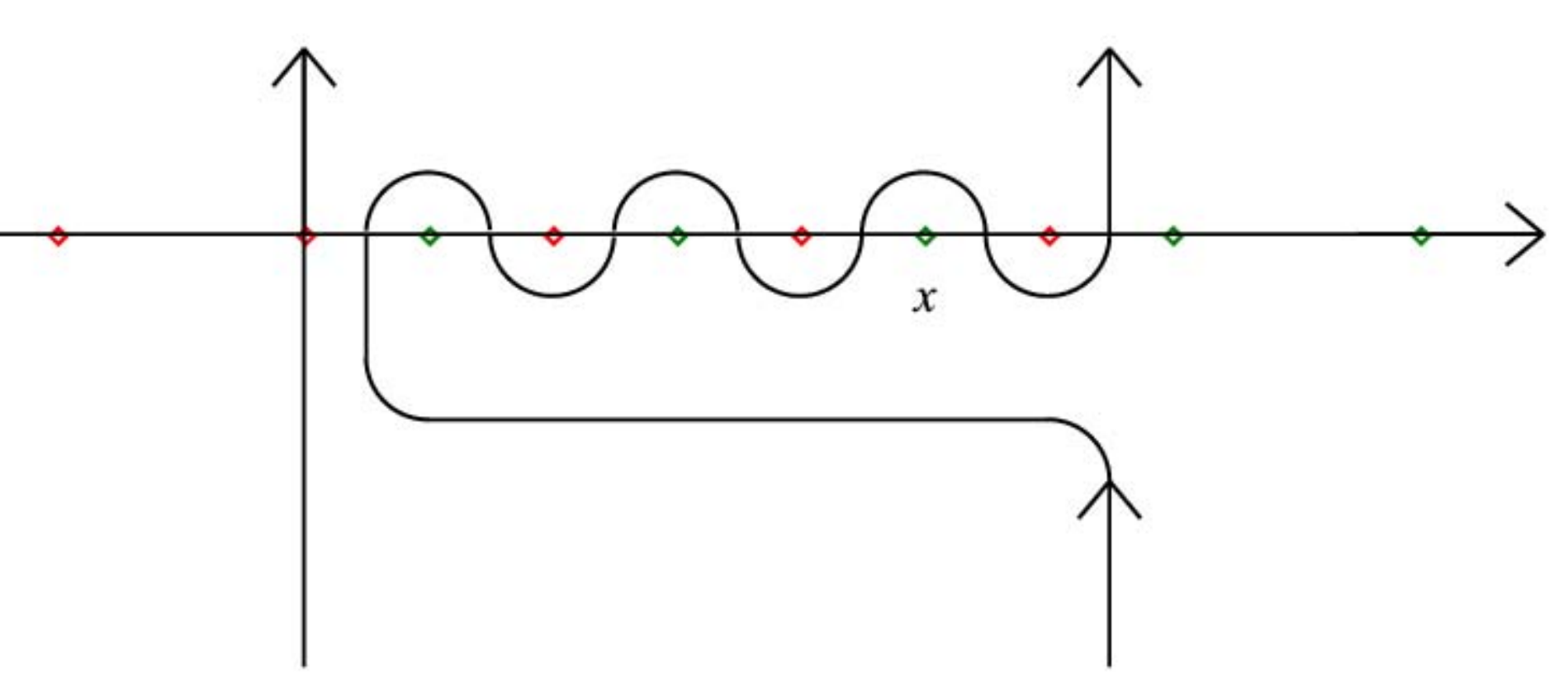}\\
Case 1&Case 2&Case 3\\
\end{tabular}
\end{table}

The integral \eqref{2.1} exists   when $\sum_{j=1}^q\beta_j-\sum_{i=1}^p\alpha_i\geq0$
(see \cite{ref-book1},  Theorem 1.1).

\begin{example} To compare with the classical case $\alpha=1$,   we consider the case
$m=2$, $n=0$, $p=1$,  $q=2$, $a_1=\alpha_1 =b_2= \beta_1=\beta_2=1$ and  $b_1=\frac d2$.
Let $L=L_{-\infty}$. Then, we have
\begin{eqnarray}
H^{20}_{12}\bigg[z\bigg|\begin{array}{cc}
                                                                                                (1, 1)&  \\
                                                                                                (\frac d2, 1), &(1, 1)
                                                                                              \end{array}
\bigg]
&=&  \frac {1}{2\pi i}\int_L\frac{ \Gamma(\frac d2+s)\Gamma(1+s)}{\Gamma(1+s)}z^{-s} ds\nonumber \\
&=&  \frac {1}{2\pi i}\int_L  \Gamma(\frac d2+s)  z^{-s} ds\nonumber \\
&=&\sum_{v=0}^{\infty}
\lim\limits_{s\rightarrow-(\frac d2+v)}(s+\frac d2+v)\Gamma(\frac d2+s)z^{-s}\nonumber \\
&=&\sum_{v=0}^{\infty}\lim\limits_{s\rightarrow-(\frac d2+v)}\frac{\Gamma(v+\frac d2+s+1)}{(s+\frac d2+v-1)\cdots (s+\frac d2)}z^{-s}\nonumber \\
&=&\sum_{v=0}^{\infty}z^{d/2}(-1)^v \frac{1}{v!}z^{v}\nonumber \\
&=&z^{d/2} \exp (-z)\,.\label{e.h.1}
\end{eqnarray}
\end{example}

\subsection{Green's functions $Z$ and $Y$ when $B$ has  constant coefficients}
%

In this subsection let us consider $Z$ and $Y$ when the operator $B$   in \eqref{1.4} has the following form
$$\textit{B}=\sum^d_{i, j=1}a_{ij}\frac{\partial^2}{\partial x_i\partial x_j}, $$
 where the matrix $A=(a_{ij})$ is positive definite. In this case, $Z$ and $Y$ (we call them $Z_0$ and $Y_0$
 to distinguish with the general coefficient case) are given as follows.
  $$Z_0(t , x )=\frac{\pi^{-d/2}}{(\det A)^{1/2}}\bigg[\sum^d_{i, j=1}A^{(ij)} x_i x_j
  \bigg]^{-d/2}$$
$$\times H^{20}_{12}\bigg[\frac14 t^{-\alpha}\sum^d_{i, j=1}A^{(ij)} x_i x_j
\bigg|\begin{array}{cc}
                                                                                                (1, \alpha)&  \\
                                                                                                (\frac d2, 1), &(1, 1)
                                                                                              \end{array}
\bigg], $$ where $(A^{(ij)})=A^{-1}$
and%
$$Y_0(t, x )=\frac{\pi^{-d/2}}{(\det A)^{1/2}}\bigg[\sum^d_{i, j=1}A^{(ij)} x_i x_j \bigg]^{-d/2} t
 ^{\alpha-1}$$
$$\times H^{20}_{12}\bigg[\frac14 t^{-\alpha}\sum^d_{i, j=1}A^{(ij)} x_i x_j \bigg|\begin{array}{cc}
                                                                                                (\alpha, \alpha)&  \\
                                                                                                (\frac d2, 1), &(1, 1)
                                                                                              \end{array}
\bigg].$$
It is easy to see that for the constant coefficient case,
the both of the Green's functions are homogeneous in time and space. Namely,
\[
Z_0(t, x,    \xi)=Z_0(t ,x-
  \xi) \,,\quad Y_0(t, x,    \xi)=Y_0(t ,x-
  \xi) \,.
\]
In particular,   when $\alpha=1$,  it is easy to see from the above expression
and the explicit form \eqref{e.h.1} of $H^{20}_{12}(z)$ that
\[
Z_0(t, x,    \xi)=Y_0(t, x,    \xi)=(4\pi)^{-d/2} \det(A)^{-1/2} \exp\left\{ -\frac{\sum^d_{i, j=1}A^{(ij)}(x_i-\xi_i)(x_j-\xi_j)}{4t }\right\}\,.
\]
which reduces to \eqref{e.zandy.1} when $A=I$ is the identity matrix.
%
%
 %
%

%

With the above expression for $Z_0$ and $Y_0$ and the properties of the {\sl H}-function , one can obtain the following estimates.

%
%
%

\begin{Proposition} 
Denote
\begin{equation}
p(t, x)=\exp\big(-\sigma t^{-\frac{\alpha}{2-\alpha}}|x|^{\frac{2}{2-\alpha}}\big)\,, \quad t>0\,, \
x\in \RR^d\,,  \label{e.p-x}
\end{equation}
where $\sigma\in (0, \infty)$ is a constant whose exact value is irrelevant
in  the paper. Then,     we have the following estimates:
\begin{equation}
|Z_0(t, x)|\leq
\begin{cases}
Ct^{-\frac{\alpha}{2} }  p(t, x)& \hbox{when}\quad d=1\\
Ct^{-\alpha }  [|\log\frac{|x|^2}{t^{\alpha}}|+1] p(t, x)& \hbox{when}\quad d=2\\
 Ct^{-\alpha}|x|^{2-d}  p(t, x)& \hbox{when}\quad d\ge 3\,,
  \end{cases}\label{e.ine.z0}
 \end{equation}
 where for instance,
 $|Z_0(t, x)|\leq Ct^{-\frac{\alpha}{2} }  p(t, x)$ means that there are  positive constant $C$ and positive constant
$\sigma$ such that the above inequality holds. In what follows  the positive
constants $C$ and $\sigma$ are generic,  which may be different in
different appearances.
\end{Proposition}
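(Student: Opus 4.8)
The plan is to reduce the statement to a one-variable asymptotic analysis of the {\sl H}-function
\[
h(z):=H^{20}_{12}\Big[z\,\Big|\begin{array}{cc}(1,\alpha)&\\ (\tfrac d2,1),&(1,1)\end{array}\Big]
\]
and then to substitute $z=\tfrac14\,t^{-\alpha}Q(x)$ with $Q(x):=\sum_{i,j=1}^{d}A^{(ij)}x_ix_j$. Since $A$ is positive definite, $Q(x)\asymp|x|^{2}$ with constants depending only on $A$, whence $|Z_0(t,x)|\asymp|x|^{-d}\,|h(z)|$ and, crucially, $z^{1/(2-\alpha)}\asymp t^{-\alpha/(2-\alpha)}|x|^{2/(2-\alpha)}$, which is precisely the exponent in the definition \eqref{e.p-x} of $p(t,x)$. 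It therefore suffices to estimate $h(z)$ separately on $\{z\le1\}$ (the region $|x|^{2}\lesssim t^{\alpha}$, where $p(t,x)\asymp1$) and on $\{z\ge1\}$ (the region $|x|^{2}\gtrsim t^{\alpha}$).

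For $z\le1$ I would use the Mellin--Barnes representation \eqref{2.1}: with $m=2,\ n=0,\ p=1,\ q=2$ the integrand is $\Gamma(\tfrac d2+s)\Gamma(1+s)/\Gamma(1+\alpha s)$, and shifting the contour $L_{-\infty}$ to the left writes $h(z)$ as a convergent sum of residues whose leading term is governed by the rightmost pole lying to the left of $L$. For $d=1$ this is the simple pole $s=-\tfrac12$, giving $|h(z)|\le Cz^{1/2}$; for $d\ge3$ it is the simple pole $s=-1$ (since then $-\tfrac d2\le-\tfrac32<-1$, and for $\alpha<1$ the factor $\Gamma(1+\alpha s)$ does not cancel the numerator $\Gamma(1+s)$), giving $|h(z)|\le Cz$; and for $d=2$ the poles of $\Gamma(\tfrac d2+s)$ and of $\Gamma(1+s)$ coalesce at $s=-1$ into a double pole, producing a term of the form $z(c_1\log z+c_2)$, hence $|h(z)|\le Cz(|\log z|+1)$. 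Multiplying by $|x|^{-d}$ and inserting $z\asymp t^{-\alpha}|x|^{2}$ reproduces the three cases of \eqref{e.ine.z0} on this region, because there $p(t,x)$ is bounded above and below by positive constants.

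For $z\ge1$ I would invoke the exponential decay of {\sl H}-functions of the type $H^{20}_{pq}$: here $\sum_{j=1}^{q}\beta_j-\sum_{i=1}^{p}\alpha_i=(1+1)-\alpha=2-\alpha=:\Delta>0$, so the asymptotic theory of the {\sl H}-function (see \cite{ref-book1}) provides constants $C,\mu,\nu>0$ with $|h(z)|\le Cz^{\mu}\exp(-\nu z^{1/\Delta})$ for $z\ge1$, where $\Delta=2-\alpha$. On $\{z\ge1\}$ each of the three functions on the right-hand side of \eqref{e.ine.z0}, once multiplied by $|x|^{d}$, is at least $z^{1/2}$ up to a constant (and, when $d=2$, up to the logarithmic factor), because $t^{-\alpha}|x|^{2}\asymp z$ and $|x|\asymp z^{1/2}t^{\alpha/2}$. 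Hence it is enough to check $z^{\mu}\exp(-\nu z^{1/(2-\alpha)})\le Cz^{1/2}\exp(-\sigma z^{1/(2-\alpha)})$ for $z\ge1$ with $0<\sigma<\nu$, which holds since any power of $z$ is dominated by $\exp((\nu-\sigma)z^{1/(2-\alpha)})$ on $z\ge1$. After the substitution $z=\tfrac14 t^{-\alpha}Q(x)$ the factor $\exp(-\sigma z^{1/(2-\alpha)})$ becomes $p(t,x)$ (with a generic constant $\sigma$), giving \eqref{e.ine.z0} on $\{z\ge1\}$.

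I expect the main obstacle to be the large-$z$ asymptotics of $h(z)$: one must identify the exact exponential rate $1/\Delta=1/(2-\alpha)$ (together with a harmless polynomial order $\mu$) and then verify that the substitution $z=\tfrac14 t^{-\alpha}Q(x)$ turns $\exp(-\nu z^{1/(2-\alpha)})$ into $p(t,x)$ with the stated power-of-$t$ prefactor. Within the small-$z$ analysis the delicate point is the coalescing pair of poles at $s=-1$ when $d=2$, which is what forces the logarithmic factor; away from it the residue computation is elementary. The remaining ingredients --- the equivalence $Q(x)\asymp|x|^{2}$ and the absorption of polynomial factors into the exponential on $\{z\ge1\}$ --- are routine.
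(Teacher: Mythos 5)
Your proof is correct in outline, but it takes a genuinely different route from the paper. The paper's proof is essentially a citation: it sets $R=|x|^2/t^{\alpha}$ and quotes Proposition 1 of Eidelman--Kochubei \cite{ref-journal1}, which already gives $|Z_0|\le Ct^{-\alpha/2}$, $Ct^{-\alpha}[|\log R|+1]$, $Ct^{-\alpha}|x|^{2-d}$ on $\{R\le1\}$ (where $p(t,x)$ is bounded below) and $|Z_0|\le Ct^{-\alpha d/2}p(t,x)$ on $\{R>1\}$; the only work done is the final manipulation $t^{-\alpha d/2}=t^{-\alpha}|x|^{2-d}\,R^{d/2-1}$ together with the absorption of the polynomial factor $R^{d/2-1}$ into $p(t,x)$ at the cost of shrinking $\sigma$. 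You instead rederive these inputs from scratch out of the Mellin--Barnes representation \eqref{2.1}: the residue computation at the rightmost left pole ($s=-\tfrac12$ for $d=1$; the simple pole $s=-1$ of $\Gamma(1+s)$ for $d\ge3$, where $\Gamma(1+\alpha s)$ is finite since $\alpha<1$; the double pole at $s=-1$ for $d=2$ giving the logarithm) correctly reproduces the small-$z$ behavior, and the exponential decay $|h(z)|\le Cz^{\mu}\exp(-\nu z^{1/(2-\alpha)})$ for $H^{2,0}_{1,2}$ with $\Delta=2-\alpha>0$ is the standard large-argument asymptotic (Kilbas--Saigo \cite{ref-book1}), whose rate $z^{1/(2-\alpha)}$ is exactly what turns into $t^{-\alpha/(2-\alpha)}|x|^{2/(2-\alpha)}$ in $p(t,x)$. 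The two regions $\{z\lessgtr1\}$ and $\{R\lessgtr1\}$ coincide up to constants, and your final absorption of powers of $z$ into the exponential is the same trick the paper uses for $d\ge3$. What your route buys is self-containedness (it effectively reproves the needed part of \cite{ref-journal1}); what it costs is that the large-$z$ asymptotics of the H-function, which you rightly flag as the main obstacle, must be cited precisely or proved by a saddle-point/contour argument rather than asserted. One cosmetic point worth recording for even $d\ge4$: poles of $\Gamma(\tfrac d2+s)$ and $\Gamma(1+s)$ coalesce at $s=-2,-3,\dots$, producing logarithmic terms of order $z^{2}\log z$ and higher, but these are dominated by the leading $z$ term on $\{z\le1\}$, so your stated bound is unaffected.
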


\begin{proof}
Denote $R=x^2/t^{\alpha}$.
From \cite{ref-journal1},  Proposition 1, it follows that
when $R\le 1$,  we  have
\[
|Z_0(t, x)|  \leq
\begin{cases}
Ct^{-\frac{\alpha}{2} }     & \hbox{when}\quad d=1\\
Ct^{-\alpha }  [|\log\frac{|x|^2}{t^{\alpha}} |+1]   & \hbox{when} \quad d=2\\
 Ct^{-\alpha}  |x|^{2-d}    & \hbox{when}\quad d\ge 3\,,    \\
\end{cases}\
\]
Since when $R\le 1$,  $p(t, x)$ is bounded from below. This proves
the inequality \eqref{e.ine.z0}
when $R\le 1$.

When $R>1$,  then by  \cite{ref-journal1},  Proposition 1 we have
$|Z_0(t, x)|  \leq  C t^{-\frac{\alpha d} {2}}  p(t,x)$.  It is clear that this implies
the inequality \eqref{e.ine.z0}   when $d=1$ and $d=2$.  Now, we assume that $d\ge 3$.
  We have
\begin{eqnarray*}
|Z_0(t, x)|  &\leq&  C t^{-\frac{\alpha d} {2}}  p(t,x)
\le Ct ^{-\alpha } x^{2-d} \left(\frac{x^2}{t^{\alpha}}\right)^{\frac{d}{2}-1} p(t,x)\\
&\le& Ct ^{-\alpha } x^{2-d}   p(t,x)\,,
\end{eqnarray*}
where we used the fact that $\left(\frac{x^2}{t^{\alpha}}\right)^{\frac{d}{2}-1} p(t,x)\le p(t,x)$
for a different $\sigma$ in the later $p(t,x)$.
\end{proof}

Similarly,   we can use  \cite{ref-journal1},   Proposition 2 (for $d=1$ case)  and
  \cite{ref-journal1}, Section 4.2 (for   $d\ge 2$ case) to
  obtain  the following estimates for $Y_0(t,x)$.
\begin{Proposition}\label{p.6}  
We follow the same notation $p(t,x)$ as defined by \eqref{e.p-x}.  We have
\begin{enumerate}
\item\quad
When $d=1$,  we have the following estimates:
\begin{eqnarray}
|Y_0(t, x)|\leq \left\{
  \begin{array}{ll}
    $$Ct^{\frac{\alpha}{2}-1}p(t, x)$$   & \hbox{when}\quad \hbox{$t^{-\alpha}|x|^2\geq1$ } \\
    $$Ct^{\frac{\alpha}{2}-1}$$  & \hbox{when}\quad \hbox{$t^{-\alpha}|x|^2\leq1$.}
  \end{array}
\right.
\end{eqnarray}
\item\quad  When $d\ge 2$, we have the following estimates:
 \begin{equation}
|Y_0(t, x)|\leq
\begin{cases}
Ct^{-1}  p(t, x)& \hbox{when}\quad d=2\\
Ct^{-\frac{\alpha}{2}-1}  p(t, x)& \hbox{when}\quad d=3\\
 Ct^{-\alpha-1}[|\log\frac{|x|^2}{t^{\alpha}}|+1]  p(t, x)& \hbox{when}\quad d=4\\
 Ct^{-\alpha-1}|x|^{4-d}  p(t, x)& \hbox{when}\quad d\ge 5\,,   \\
 \end{cases}
 \end{equation}
where for instance, $|Y_0(t, x)|\leq
Ct^{-1}  p(t, x)$ means that there are  positive constant $C$ and positive constant
$\sigma$ such that the above inequality holds. In what follows  the positive
constants $C$ and $\sigma$ are generic,  which may be different in
different appearances.
 \end{enumerate}
\end{Proposition}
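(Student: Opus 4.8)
The plan is to repeat, almost verbatim, the argument that produced the bound \eqref{e.ine.z0} for $Z_0$, now starting from the pointwise estimates on $Y_0$ recorded in \cite{ref-journal1}, Proposition~2 (for $d=1$) and \cite{ref-journal1}, Section~4.2 (for $d\ge2$). Set $R:=|x|^2/t^\alpha$, so that the exponential weight in \eqref{e.p-x} is $p(t,x)=\exp(-\sigma R^{1/(2-\alpha)})$, and split $\RR^d$ into the two regions $\{R\le1\}$ and $\{R>1\}$. The cited bounds come from inserting the small-argument and large-argument asymptotics of $H^{20}_{12}$ into the explicit formula for $Y_0$, whose algebraic prefactor $\big[\sum_{i,j}A^{(ij)}x_ix_j\big]^{-d/2}t^{\alpha-1}$ is comparable to $|x|^{-d}t^{\alpha-1}$.

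On $\{R\le1\}$ the small-argument estimates give exactly the powers of $t$ claimed --- together with the factor $[\,|\log\tfrac{|x|^2}{t^\alpha}|+1\,]$ when $d=4$ and the factor $|x|^{4-d}$ when $d\ge5$ --- but with no exponential factor. Since $R\le1$ forces $p(t,x)\ge e^{-\sigma}>0$, I can multiply these by $p(t,x)$ at the cost of an absolute constant, which yields the asserted inequalities on this region; for $d=1$ the statement carries no factor $p$ there, so nothing has to be done.

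On $\{R>1\}$ the large-argument estimates give, exactly as for $Z_0$ but carrying the extra $t^{\alpha-1}$ from the prefactor, a bound of the shape $|Y_0(t,x)|\le C\,t^{\alpha-1-\alpha d/2}\,p(t,x)$. For $d=1,2,3$ the exponent $\alpha-1-\alpha d/2$ equals $\tfrac\alpha2-1,\ -1,\ -\tfrac\alpha2-1$, which is already what is claimed, and for $d=4$ the resulting $t^{-\alpha-1}$ is even stronger than the claimed $t^{-\alpha-1}[\,|\log|+1\,]$ (the logarithm is needed only in the regime $R\le1$). For $d\ge5$ I would write
\[
t^{\alpha-1-\alpha d/2}=t^{-\alpha-1}\,|x|^{4-d}\,R^{(d-4)/2}\qquad(d\ge5),
\]
and absorb the positive power $R^{(d-4)/2}$ into the exponential by the elementary inequality $R^\theta e^{-\sigma R^{1/(2-\alpha)}}\le C_\theta\,e^{-\sigma' R^{1/(2-\alpha)}}$ (valid for $\theta>0$ and $0<\sigma'<\sigma$), the same device used above for $Z_0$; this gives $|Y_0(t,x)|\le C\,t^{-\alpha-1}|x|^{4-d}p(t,x)$ with a relabelled $\sigma$. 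The $d=1$, $R>1$ case is immediate in the same fashion.

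The step I expect to be the main obstacle is not analytic but purely a matter of bookkeeping: one must check that, after multiplying the $H^{20}_{12}$-asymptotics quoted from \cite{ref-journal1} by $|x|^{-d}t^{\alpha-1}$, one lands precisely on the exponents of $t$ and $|x|$ in the statement --- in particular that the borderline dimensions $d=2,4$ produce exactly the stated behaviour, and that, unlike for $Z_0$, the would-be double pole of the relevant Mellin integrand is either absent or tamed so that $d=2$ carries no logarithm. Beyond the elementary absorption inequality $R^\theta p\le Cp$, there is no real difficulty.
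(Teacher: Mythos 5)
Your argument is correct and is essentially the paper's own: the paper gives no written proof of this proposition beyond citing \cite{ref-journal1} (Proposition~2 for $d=1$, Section~4.2 for $d\ge2$) and saying the estimates follow ``similarly'' to those for $Z_0$, i.e.\ exactly the $R\le1$ / $R>1$ splitting, the lower bound $p(t,x)\ge e^{-\sigma}$ on $\{R\le1\}$, and the absorption $R^\theta p\le Cp$ (with a relabelled $\sigma$) on $\{R>1\}$ that you describe. Your remarks on the bookkeeping — in particular that the $(\alpha,\alpha)$ upper parameter tames the $d=2$ double pole via $1/\Gamma(\alpha+\alpha s)|_{s=-1}=0$ while the log survives at $d=4$ — are consistent with the stated bounds.
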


\subsection{Green's functions $Z$ and $Y$ in general coefficient case}

If  the coefficients of $B$ are not constant, then the Green's functions $Z$ and $Y$
are more complicated and may be obtained by a method similar to the Levi parametrix
for the parabolic equations.

Denote
\begin{eqnarray*}
M(t, x, \xi)
&=&\sum^d_{i, j=1} [a_{ij}(x)-a_{ij}(\xi)]\frac{\partial^2}{\partial x_i\partial x_j}Z_0(t, x-\xi) \\
&&\qquad +\sum^d_{i=1}b_{i}(x)\frac{\partial}{\partial x_i}Z_0(t, x-\xi)+c(x)Z_0(t, x-\xi)\\
K(t, x, \xi)
&=&\sum^d_{i, j=1} [a_{ij}(x)-a_{ij}(\xi)]\frac{\partial^2}{\partial x_i\partial x_j}Y_0(t, x-\xi) \\
&&\qquad
 +\sum^d_{i=1}b_{i}(x)\frac{\partial}{\partial x_i}Y_0(t, x-\xi)+c(x)Y_0(t, x-\xi)\,.
 \end{eqnarray*}
Let
$Q(s, y, \xi)$ and $\Phi(s, y, \xi)$ be  defined by
$$Q(t, x, \xi)=M(t, x, \xi)+ \int_0^t  ds \int_{\mathbb{R}^d} K(t-s, x,y)Q(s, y, \xi) dy, $$
$$\Phi(t, x, \xi)=K(t, x, \xi)+ \int_0^t  ds \int_{\mathbb{R}^d} K(t-s, x,y)\Phi(s, y, \xi) dy$$

\begin{Proposition}\label{Proposition5}  Let the coefficients $a_{ij}(x)$ and
$b_i(x) $ satisfy the conditions in Theorem \ref{main}. Recall that $\gamma$ is the
H\"older exponent of the coefficients with respect to the spatial variable $x$.
Then, the  Green's  functions $\{Z(t, x, \xi), Y(t, x, \xi)\}$ have  the following form:
$$Z(t, x, \xi)=Z_0(t, x-\xi)+V_Z(t, x, \xi); $$
\begin{equation}\label{2.2}
Y(t, x, \xi)=Y_0(t, x-\xi)+V_Y(t, x, \xi),
\end{equation}
where $$V_Z(t, x, \xi)= \int_0^t  ds \int_{\mathbb{R}^d} Y_0(t-s, x,y)Q(s, y, \xi) dy; $$
$$V_Y(t, x, \xi)= \int_0^t  ds \int_{\mathbb{R}^d} Y_0(t-s, x,y)\Phi(s, y, \xi) dy. $$

Moreover,  the function $V_Z(t, x, \xi), V_Y(t, x, \xi)$ satisfy the following estimates.
\begin{equation}
|V_Z(t, x, \xi)|\leq
\begin{cases}
Ct^{(\gamma-1)\frac\alpha2}  p(t, x-\xi)\,, & \qquad \hbox{when } \ d=1\,;\\
Ct^{\frac{\gamma\alpha}{2}-\alpha}  p(t, x-\xi)\,,  &\qquad \hbox{when } \ d=2\,;\\
 Ct^{\frac{\gamma_0\alpha}{2}-\alpha}|x-\xi|^{2-d+\gamma-\gamma_0}  p(t, x-\xi)\,,
 &\qquad \hbox{when } \ d=3 \ {\rm or}\ d\ge 5\,; \\
Ct^{(\gamma-\gamma_0)\frac\alpha2-\alpha}|x-\xi|^{-2+ \gamma-2\gamma_0}  p(t, x-\xi)\,,
 &\qquad \hbox{when } \ d=4  \\
\end{cases}
\end{equation}
and
\begin{equation}
|V_Y(t, x, \xi)|\leq
\begin{cases} Ct^{\alpha -1+(\gamma-1)\frac\alpha2}  p(t, x-\xi)\,, & \qquad \hbox{when } \ d=1\,;\\
Ct^{\frac{\gamma\alpha}{2}-1}  p(t, x-\xi)\,, & \qquad \hbox{when } \ d=2\,;\\
Ct^{(\gamma_0+\gamma)\frac\alpha4-1}|x-\xi|^{2-d+(\gamma-\gamma_0)/2}  p(t, x-\xi)\,, & \qquad \hbox{when } \  d=3 \ {\rm or}\ d\ge 5\,; \\
Ct^{(\gamma-\gamma_0)\frac\alpha4-1}|x-\xi|^{-2+\gamma-2\gamma_0}  p(t, x-\xi)\,,
 &\qquad \hbox{when } \ d=4  \\
\end{cases}
\end{equation}
Here $\gamma_0$ is any number such that
$0 < \gamma_0<\gamma$  and in the case    $d\geq 3$,
the constant C   depends  on $\gamma_0$.

\end{Proposition}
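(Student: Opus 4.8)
The plan is to run the classical Levi parametrix construction, adapted to the Caputo time-fractional setting, taking the constant-coefficient Green's functions $Z_0(t,x-\xi)$ and $Y_0(t,x-\xi)$ of the operator $B_\xi=\sum_{i,j}a_{ij}(\xi)\,\partial^2_{x_i x_j}$ (coefficients frozen at the pole $\xi$) as the zeroth approximation. The operators $M$ and $K$ measure the error committed by this freezing, the Volterra integral equations for $Q$ and $\Phi$ sum up the iterated errors, and $V_Z=Y_0*Q$, $V_Y=Y_0*\Phi$ (parabolic convolution in $(t,x)$) are the resulting corrections, exactly as in \eqref{2.2}. That the functions so defined really are the Green's functions of the variable-coefficient problem \eqref{1.1} is the standard output of the parametrix method and is already available in \cite{ref-journal1}, \cite{ref-journal3}, \cite{ref-journal4}; hence the work here is to establish the two displayed families of estimates for $V_Z$ and $V_Y$.

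I would proceed in four steps. First, from the $H$-function representations of $Z_0$ and $Y_0$ and the same asymptotics near $0$ and $\infty$ that underlie \eqref{e.ine.z0} and Proposition \ref{p.6}, differentiate once and twice in $x$: each spatial derivative worsens the time and space singularity in the way dictated by the self-similar scaling $|x|^2\sim t^{\alpha}$, while the stretched-exponential profile $p(t,x)$ of \eqref{e.p-x} is preserved up to shrinking $\sigma$. Second, since $|a_{ij}(x)-a_{ij}(\xi)|\le C|x-\xi|^{\gamma}$ and the $b_i$, $c$ are bounded, the most singular contribution to $M$ (resp.\ $K$) is the second-order term, so the H\"older gain $|x-\xi|^{\gamma}$ absorbs part of the singularity of $\partial^2 Z_0$ (resp.\ $\partial^2 Y_0$), producing a bound for $M$ of the shape $C\,t^{-\alpha}|x-\xi|^{\gamma-d}\,p(t,x-\xi)$ in high dimension (with the appropriate logarithmic and low-dimensional variants) and a companion bound for $K$. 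Third, solve the two Volterra equations by Neumann series, $Q=\sum_{n\ge0}\mathcal K^{n}M$ and $\Phi=\sum_{n\ge0}\mathcal K^{n}K$ with $\mathcal K g(t,x,\xi)=\int_0^t\!\!\int_{\RR^d}K(t-s,x,y)\,g(s,y,\xi)\,dy\,ds$; convergence for each fixed $t\le T$ rests on a parabolic convolution lemma, namely that $\int_{\RR^d}p(t-s,x-y)\,p(s,y-\xi)\,dy$ is bounded by a positive power of the time variables times $p(t,x-\xi)$ (the stretched-exponential analogue of Chapman--Kolmogorov), together with the Riesz-type spatial composition $\int_{\RR^d}|x-y|^{-a}|y-\xi|^{-b}\,dy\sim|x-\xi|^{d-a-b}$ and the elementary time integral $\int_0^t(t-s)^{\mu-1}s^{\nu-1}\,ds$, which equals $t^{\mu+\nu-1}$ times a finite Beta-function factor; the Gamma-function factors generated by the $n$-fold iteration force the series to converge on $[0,T]$, and $Q$, $\Phi$ then inherit bounds of the same shape as $M$, $K$. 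Fourth, carry out the remaining parabolic convolution $V_Z=Y_0*Q$ (and $V_Y=Y_0*\Phi$); since $Y_0$ is itself time-singular, this integral converges only after trading, as explained below, part of the spatial decay for time regularity, and collecting all powers yields exactly the claimed estimates.

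The step I expect to be the main obstacle is keeping every kernel entering the iteration simultaneously locally integrable in space and nondegenerate in time when the dimension is large: for $d\ge3$, and acutely for $d\ge5$, the spatial singularities of $\partial^2 Z_0$ and $\partial^2 Y_0$ are too strong for the naive convolutions to converge, so one splits $|x-\xi|^{\gamma-d}=|x-\xi|^{\gamma-\gamma_0-d}\cdot|x-\xi|^{\gamma_0}$ for some $0<\gamma_0<\gamma$ and converts the factor $|x-\xi|^{\gamma_0}$ into the time gain $t^{\gamma_0\alpha/2}$ via $|x-\xi|^{\gamma_0}\,p(t,x-\xi)\le C\,t^{\gamma_0\alpha/2}\,p(t,x-\xi)$ (again at the cost of a smaller $\sigma$); this is precisely why $\gamma_0$ appears in the statement and why the constants depend on it for $d\ge3$. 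The delicate bookkeeping is then to choose $\gamma_0$ small enough that, along the whole Neumann series and through the two outer convolutions with $Y_0$, every kernel keeps spatial singular exponent strictly above $-d$ and strictly positive time exponent, so that all the Beta-function integrals converge and the series sums; the cases $d=1,2$ are handled directly (no spatial factor, at worst a logarithm) while $d=3,4$ and $d\ge5$ each require the split above. A secondary, purely technical nuisance is the logarithmic borderline behaviour ($d=2$ for $Z_0$, $d=4$ for $Y_0$), which must be carried along through all four steps but introduces no essential difficulty.
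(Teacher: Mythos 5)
The paper does not actually prove Proposition \ref{Proposition5}: it states the parametrix decomposition (the definitions of $M$, $K$, $Q$, $\Phi$, $V_Z$, $V_Y$ are exactly the Levi construction with coefficients frozen at the pole) and defers the estimates to the cited work of Eidelman and Kochubei \cite{ref-journal1}. Your outline is precisely that standard argument --- derivative bounds on $Z_0$, $Y_0$ from the $H$-function asymptotics, the H\"older gain $|x-\xi|^{\gamma}$ in $M$ and $K$, Neumann-series solution of the Volterra equations, the sub-convolution property of $p(t,x)$, and the $\gamma_0$-splitting trading spatial decay for the time factor $t^{\gamma_0\alpha/2}$ --- so it matches the approach the paper relies on, and I see no gap in the strategy beyond the bookkeeping you already flag.
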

%

\section{Auxiliary lemmas}


To prove our main theorem,  we   need to dominate certain
multiple integral involving $Y(t,x, \xi)$ and $Z(t,x,\xi)$.
Since both $Y(t,x, \xi)$ and $Z(t,x,\xi)$ are complicated,
we shall first bounded them by $p(t,x-\xi)$
from  the estimations of $|Y_0(t, x)|$ and $|V_Y(t, x,\xi)|$.
More precisely, we have the following bounds for
$Y(t,x, \xi)$.

\begin{Lemma}\label{lem5} Let $x \in  \mathbb{R}^d, t\in(0, T]$.   Then
\begin{equation}
|Y(t, x,\xi)|\leq \left\{
  \begin{array}{ll}
   $$ Ct^{-1+\frac{\alpha}{2}}   p(t, x-\xi)$$,  & \hbox{ $d=1$;} \\
     $$Ct^{-1}  p(t, x-\xi) $$,  & \hbox{$d=2$;} \\
       $$Ct^{-(\gamma-2\gamma_0)\frac\alpha2-1} |x-\xi|^{-2+\gamma-2\gamma_0}  p(t, x-\xi)$$,  & \hbox{$d=4$;} \\
     $$Ct^{-(\gamma-\gamma_0)\frac\alpha4-1}|x-\xi|^{2-d+(\gamma-\gamma_0)/2}  p(t, x-\xi)$$,  & \hbox{$d=3$
     \ {\rm or}\  $d\geq 5$.}
  \end{array}
\right. \label{e.Y-bound}
\end{equation}
\end{Lemma}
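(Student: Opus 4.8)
The proof starts from the parametrix decomposition $Y(t,x,\xi)=Y_0(t,x-\xi)+V_Y(t,x,\xi)$ of Proposition~\ref{Proposition5}: by the triangle inequality it suffices to bound $|Y_0(t,x-\xi)|$ and $|V_Y(t,x,\xi)|$ separately by the right-hand side of \eqref{e.Y-bound}. The first is controlled by Proposition~\ref{p.6} and the second by the $V_Y$-estimates of Proposition~\ref{Proposition5}, so the entire task is to reconcile those bounds into one common expression in each dimension.

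The key device is that, putting $R=R(t,x,\xi):=|x-\xi|^2t^{-\alpha}$, one has $p(t,x-\xi)=\exp\!\big(-\sigma R^{1/(2-\alpha)}\big)$, so $p$ is a decreasing function of $R$ alone, while $|x-\xi|=R^{1/2}t^{\alpha/2}$, so any power of $|x-\xi|$ splits into a power of $R$ times a compensating power of $t$. Combined with $0<t\le T$ (hence $t^{b}\le Ct^{b'}$ whenever $b\ge b'$) and with $p(t,x-\xi)\ge e^{-\sigma}$ on $\{R\le1\}$, the only genuine estimates needed are the absorption inequalities, each valid with a smaller generic $\sigma$ on the right: for every $\beta\ge0$,
\[
R^{\beta}\,p(t,x-\xi)\le C\,p(t,x-\xi)\quad\Longleftrightarrow\quad p(t,x-\xi)\le C\,R^{-\beta}\,p(t,x-\xi),
\]
and, to absorb the logarithm occurring at $d=4$, for every $a>0$,
\[
\big(1+|\log R|\big)\,p(t,x-\xi)\le C\,R^{-a}\,p(t,x-\xi).
\]
Both reduce to the fact that, for $\sigma'<\sigma$, the function $R\mapsto R^{\beta}(1+|\log R|)\exp\!\big((\sigma'-\sigma)R^{1/(2-\alpha)}\big)$ is bounded on $(0,\infty)$.

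With these in hand the four cases are quick. For $d=1$ the two-regime bound of Proposition~\ref{p.6} collapses (the $R\le1$ part using $p\ge e^{-\sigma}$) to $|Y_0|\le Ct^{\alpha/2-1}p(t,x-\xi)$, while $|V_Y|\le Ct^{\alpha-1+(\gamma-1)\alpha/2}p(t,x-\xi)=Ct^{\alpha/2-1+\gamma\alpha/2}p(t,x-\xi)\le Ct^{\alpha/2-1}p(t,x-\xi)$ by $t\le T$. For $d=2$, $|Y_0|\le Ct^{-1}p(t,x-\xi)$ and $|V_Y|\le Ct^{\gamma\alpha/2-1}p(t,x-\xi)\le Ct^{-1}p(t,x-\xi)$. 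For $d=3$ and $d\ge5$ one checks, via $|x-\xi|=R^{1/2}t^{\alpha/2}$, the identities
\[
t^{-\alpha-1}|x-\xi|^{4-d}=t^{-(\gamma-\gamma_0)\alpha/4-1}\,|x-\xi|^{\,2-d+(\gamma-\gamma_0)/2}\,R^{\,1-(\gamma-\gamma_0)/4}
\]
for $d\ge5$ and
\[
t^{-\alpha/2-1}=t^{-(\gamma-\gamma_0)\alpha/4-1}\,|x-\xi|^{\,-1+(\gamma-\gamma_0)/2}\,R^{\,(1-(\gamma-\gamma_0)/2)/2}
\]
for $d=3$; since $1-(\gamma-\gamma_0)/4>0$ and $1-(\gamma-\gamma_0)/2>0$, absorbing the nonnegative power of $R$ via the first absorption inequality turns the $Y_0$-bounds of Proposition~\ref{p.6} into the claimed form, and the $V_Y$-bound already carries the correct power of $|x-\xi|$ with $t$-exponent $(\gamma_0+\gamma)\alpha/4-1\ge-(\gamma-\gamma_0)\alpha/4-1$, so $t\le T$ finishes it.

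The case $d=4$, with its logarithmic factor, is the only delicate one and is where I expect the main obstacle to lie. Fix $\gamma_0\in(0,\gamma/2)$ once and for all. One computes
\[
t^{-(\gamma-2\gamma_0)\alpha/2-1}\,|x-\xi|^{\,-2+\gamma-2\gamma_0}=t^{-\alpha-1}R^{-a},\qquad a:=\frac{2-\gamma+2\gamma_0}{2}>0,
\]
so the target for the $Y_0$-part is exactly $Ct^{-\alpha-1}R^{-a}p(t,x-\xi)$, which follows from $|Y_0|\le Ct^{-\alpha-1}(1+|\log R|)p(t,x-\xi)$ and the logarithmic absorption inequality with this $a$. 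For the $V_Y$-part, Proposition~\ref{Proposition5} supplies the same factor $|x-\xi|^{-2+\gamma-2\gamma_0}$ with $t$-exponent $(\gamma-\gamma_0)\alpha/4-1$, and $\gamma_0<\gamma/2$ gives $(\gamma-\gamma_0)\alpha/4-1\ge-(\gamma-2\gamma_0)\alpha/2-1$, so $t\le T$ again suffices. Adding the contributions in all four cases yields \eqref{e.Y-bound}. The only care needed throughout is to fix $\gamma_0$ small enough at the outset that every comparison of $t$-exponents points the right way, and to recall that the conversions among powers of $|x-\xi|$, powers of $R$, and $p$-factors are all instances of the two displayed absorption inequalities.
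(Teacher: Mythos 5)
Your proposal is correct and follows essentially the same route as the paper: decompose $Y=Y_0+V_Y$, bound each piece by Propositions \ref{p.6} and \ref{Proposition5}, and reconcile exponents by absorbing nonnegative powers of $R=|x-\xi|^2t^{-\alpha}$ (and the $d=4$ logarithm) into $p(t,x-\xi)$ at the cost of a smaller $\sigma$, finishing with $t\le T$ to compare $t$-exponents. Your systematic use of the two absorption inequalities is just a cleaner packaging of the same manipulations the paper performs case by case.
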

\begin{proof}  We shall prove the lemma case by case.
First, when $d=1$,  by   Proposition \ref{p.6}, we have
\begin{eqnarray*}
|Y_0(t, x-\xi)|
&\leq& \left\{
  \begin{array}{ll}
    $$Ct^{\frac{\alpha}{2}-1}  p(t, x-\xi)$$,  & \hbox{$t^{-\alpha}|x-\xi|^2\geq1$;} \\
    $$Ct^{\frac{\alpha}{2}-1}$$,  & \hbox{$t^{-\alpha}|x-\xi|^2\leq1$.}
  \end{array}
\right.
\end{eqnarray*}
If $t^{-\alpha}|x-\xi|^2\leq1$,  then
$$|Y_0(t, x-\xi)|\leq Ct^{-1+\frac{\alpha}{2}}\cdot \frac {p(x, t)}{e^{-\sigma}}\leq Ct^{\frac{\alpha}{2}-1}  p(t, x-\xi).$$
Therefore
\begin{eqnarray*}
|Y(t, x,\xi)|
&\leq& |Y_0(t, x-\xi)|+|V_Y(t, x,\xi)|\\
&\leq& Ct^{\alpha -1+(\gamma-1)\frac\alpha2}  p(t, x-\xi)+Ct^{-1+\frac{\alpha}{2}}  p(t, x-\xi)\\
&\leq& Ct^{-1+\frac{\alpha}{2}}   p(t, x-\xi)\,.
\end{eqnarray*}
Now we consider the case   $d=2$. From the following inequalities:
\begin{eqnarray*}
|V_Y(t, x,\xi)|
&\leq& Ct^{\gamma\frac{\alpha}{2}-1}  p(t, x-\xi)\,;\\
|Y_0(t, x-\xi)|
&\leq& Ct^{-1}  p(t, x-\xi)\
\end{eqnarray*}
we have easily
\[
|Y(t, x,\xi)|\leq |Y_0(t, x-\xi)|+|V_Y(t, x,\xi)|\leq Ct^{-1}  p(t, x-\xi) \,.
\]
We re going to  prove the lemma when   $d=3$.  From Proposition  \ref{p.6}
we have
\begin{eqnarray*}
|Y_0(t, x-\xi)|
&\leq& Ct^{-\frac\alpha2-1}  p(t, x-\xi)\\
&=&Ct^{-(\gamma-\gamma_0)\frac\alpha4-1}|x-\xi|^{-1+(\gamma-\gamma_0)/2}
\bigg|\frac{x-\xi}{t^\frac\alpha2}\bigg|^{1-(\gamma-\gamma_0)/2}  p(t, x-\xi)\\
&\leq& Ct^{-(\gamma-\gamma_0)\frac\alpha4-1}|x-\xi|^{-1+(\gamma-\gamma_0)/2}  p(t, x-\xi)\,.
\end{eqnarray*}
Combining this inequality with Proposition \ref{Proposition5}  we obtain
\[
|Y(t, x,\xi)|\leq Ct^{-(\gamma-\gamma_0)\frac\alpha4-1}|x-\xi|^{-1+(\gamma-\gamma_0)/2}  p(t, x-\xi)\,.
\]
We turn to consider the case  $d=4$. 
Proposition \ref{p.6} yields that for any $\theta>0$ the following holds true:
\begin{eqnarray*}
|Y_0(t, x-\xi)|
&\leq&  Ct^{-\alpha-1} \bigg[\bigg(\frac{|x-\xi|^2}{t^{\alpha}}\bigg)^\theta +\bigg(\frac{t^{\alpha}}{|x-\xi|^2}\bigg)^\theta\bigg]  p(t, x-\xi)\,;\\
&=& Ct^{-\alpha-1} \bigg(\frac{t^{\alpha}}{|x-\xi|^2}\bigg)^\theta\bigg[\bigg(\frac{|x-\xi|^2}{t^{\alpha}}\bigg)^{2\theta} +1\bigg]  p(t, x-\xi)\,.
\end{eqnarray*}

If $\frac{|x-\xi|^2}{t^{\alpha}}  >1$,   then
\begin{eqnarray*}
\bigg[\bigg(\frac{|x-\xi|^2}{t^{\alpha}}\bigg)^{2\theta} +1\bigg]  p(t, x-\xi)
&\leq&  2\bigg(\frac{|x-\xi|^2}{t^{\alpha}}\bigg)^{2\theta}  p(t, x-\xi) \leq C   p(t, x-\xi)\,.
\end{eqnarray*}
As a consequence, we have
\[
|Y_0(t, x-\xi)|
\leq Ct^{-\alpha-1} \bigg(\frac{t^{\alpha}}{|x-\xi|^2}\bigg)^\theta  p(t, x-\xi)\,.
\]
If $\frac{|x-\xi|^2}{t^{\alpha}}  \leq 1, $  then  the above inequality is obviously true.
Now,  we can choose $\theta>0$,  such that $-2\theta \geq (-2+\gamma-2\gamma_0)$.
Thus, we have
\begin{eqnarray*}
|Y_0(t, x-\xi)|
&=&  Ct^{-\alpha-1+\alpha\theta+(-2\theta-(-2+\gamma-2\gamma_0))\frac\alpha2}  |x-\xi|^{-2+\gamma-2\gamma_0} \\
&&\qquad \cdot \bigg(\frac{|x-\xi|}{t^{\frac\alpha2}}\bigg)^{-2\theta-(-2+\gamma-2\gamma_0)}  p(t, x-\xi)\\
&\leq&  Ct^{-(\gamma-2\gamma_0)\frac\alpha2-1}\cdot |x-\xi|^{-2+\gamma-2\gamma_0}  p(t, x-\xi)\,.
\end{eqnarray*}
Combining the above inequality with Proposition \ref{Proposition5} we have
\begin{eqnarray*}
|Y(t, x,\xi)|
&\leq&  Ct^{-(\gamma-2\gamma_0)\frac\alpha2-1} |x-\xi|^{-2+\gamma-2\gamma_0}  p(t, x-\xi)\\
&&\qquad +Ct^{(\gamma_0+\gamma)\frac\alpha4-1}|x-\xi|^{-2+\gamma-2\gamma_0}  p(t, x-\xi)\\
&\leq& Ct^{-(\gamma-2\gamma_0)\frac\alpha2-1} |x-\xi|^{-2+\gamma-2\gamma_0}  p(t, x-\xi)\
\end{eqnarray*}
since $-(\gamma-2\gamma_0)\frac\alpha2-1 \leq (\gamma_0+\gamma)\frac\alpha4-1.$

Finally we consider the case  $d\geq5$.  From the estimates
$|Y_0(t, x-\xi)|
\le   Ct^{-\alpha-1} |x-\xi|^{4-d}  p(t, x-\xi)$
we obtain
\begin{eqnarray*}
|Y_0(t, x-\xi)|
&\leq &   C t^{-(\gamma_0+\gamma)\frac\alpha4-1}|x-\xi|^{2-d+(\gamma-\gamma_0)/2}\bigg|\frac{x-\xi}{t^\frac\alpha2}\bigg|^{2-(\gamma-\gamma_0)/2}  p(t, x-\xi)\\
&\leq& t^{-(\gamma-\gamma_0)\frac\alpha4-1}|x-\xi|^{2-d+(\gamma-\gamma_0)/2}  p(t, x-\xi)\,.
\end{eqnarray*}
Therefore, we have
$$|Y(t, x,\xi)|\leq Ct^{-(\gamma-\gamma_0)\frac\alpha4-1}|x-\xi|^{2-d+(\gamma-\gamma_0)/2}  p(t, x-\xi)\,. $$
The proposition is then proved.
\end{proof}

The bound \eqref{e.Y-bound} will greatly help to
simplify our estimation of the multiple integrals that we are going to
encounter. However,  when the dimension
$d$  is greater than or equal to $2$, the multiple integrals are still complicated
to estimate and our main technique is to reduce the computation to one dimensional.
  This means we shall further bound the right hand side of
the inequality \eqref{e.Y-bound} by   product
of functions of one variable.   Before doing so,
we denote the exponents of $t$ and $|x-\xi|$ in \eqref{e.Y-bound}  by $\zeta_d$ and $\kappa_d$.
Namely, we denote
\begin{equation}
 \zeta_{d} =
\begin{cases} -1+\frac{\alpha}{2}  ,  &  d=1 ;  \\
      -1   ,  &  d=2 ;  \\
       -(\gamma-2\gamma_0)\frac\alpha2-1 ,  & d=4;  \\
     -(\gamma-\gamma_0)\frac\alpha4-1  ,  &  d=3\ {\rm or}\ d\geq 5\,.
  \end{cases}\label{e.zeta}
  \end{equation}
  and
\begin{equation}
 \kappa_{d} =
\begin{cases}   0 ,  &  d=1, 2   ;  \\
     -2+\gamma-2\gamma_0  ,  & d=4;  \\
     2-d+(\gamma-\gamma_0)/2   ,  &   d=3\ {\rm or}\ d\geq 5\,.
  \end{cases}\label{e.kappa}
\end{equation}
From now on we shall exclusively use $p(t,x)
=
\exp\big(-\sigma t^{-\frac{\alpha}{2-\alpha}}|x|^{\frac{2}{2-\alpha}}\big)$ to denote a function of one variable.
However, the constant $\sigma$ may be different in different appearances of $p(t,x)$ (for notational simplicity,
we omit the explicit dependence on $\sigma$ of $p(t,x)$).


With these notation  Lemma \ref{lem5}   yields
%
%


\begin{Lemma}\label{Corollary7}
The following bound holds true for the Green's function $Y$:
\begin{equation}\label{e.Y-product-b}
|Y(t, x,\xi)|\leq C\prod_{i=1}^{d} t^{\zeta_d/d} |x_i-\xi_i|^{\kappa_d/d}   p (t, x_i-\xi_i)\,.
\end{equation}
\end{Lemma}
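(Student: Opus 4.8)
The plan is to deduce \eqref{e.Y-product-b} directly from Lemma~\ref{lem5} by distributing, coordinate by coordinate, each of the three ingredients appearing in the right-hand side of \eqref{e.Y-bound}. Writing that bound uniformly (over the four cases $d=1$, $d=2$, $d=4$, $d=3$ or $d\ge5$) as
\[
|Y(t,x,\xi)|\le C\,t^{\zeta_d}\,|x-\xi|^{\kappa_d}\,p(t,x-\xi),
\]
where $|x-\xi|$ denotes the Euclidean norm on $\RR^d$ and $p(t,x-\xi)=\exp\big(-\sigma t^{-\frac{\alpha}{2-\alpha}}|x-\xi|^{\frac{2}{2-\alpha}}\big)$, it suffices to show $t^{\zeta_d}\le\prod_{i=1}^d t^{\zeta_d/d}$, $\;|x-\xi|^{\kappa_d}\le\prod_{i=1}^d|x_i-\xi_i|^{\kappa_d/d}$, and $\;p(t,x-\xi)\le C\prod_{i=1}^d p(t,x_i-\xi_i)$, after which one multiplies the three bounds together. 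The first of these is an identity.

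For the spatial power, I would first observe from \eqref{e.kappa} that $\kappa_d\le0$ in every case ($\kappa_d=0$ when $d=1,2$, and $\kappa_d<0$ when $d\ge3$ because $0<\gamma_0<\gamma\le1$). Then, by the arithmetic--geometric mean inequality followed by Cauchy--Schwarz,
\[
\Big(\prod_{i=1}^d|x_i-\xi_i|\Big)^{1/d}\le\frac1d\sum_{i=1}^d|x_i-\xi_i|\le\frac1{\sqrt d}\,|x-\xi|\le|x-\xi|,
\]
and raising both ends to the nonpositive power $\kappa_d$ reverses the inequality, yielding $|x-\xi|^{\kappa_d}\le\prod_{i=1}^d|x_i-\xi_i|^{\kappa_d/d}$ (with the convention that both sides are $+\infty$ when some coordinate vanishes, which is harmless since $Y$ is only being estimated off the diagonal). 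For the Gaussian-type factor, set $q=\frac{2}{2-\alpha}$; since $\alpha\in(0,1)$ we have $q>1$, hence $\sum_{i=1}^d|x_i-\xi_i|^q\le d\big(\max_i|x_i-\xi_i|\big)^q\le d\,|x-\xi|^q$, i.e.\ $|x-\xi|^q\ge\frac1d\sum_{i=1}^d|x_i-\xi_i|^q$, so
\[
p(t,x-\xi)=\exp\big(-\sigma t^{-\frac{\alpha}{2-\alpha}}|x-\xi|^q\big)\le\exp\Big(-\tfrac\sigma d\,t^{-\frac{\alpha}{2-\alpha}}\sum_{i=1}^d|x_i-\xi_i|^q\Big)=\prod_{i=1}^d p(t,x_i-\xi_i),
\]
where on the right $p$ is the one-variable function with the generic constant $\sigma/d$ replacing $\sigma$.

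Combining the three estimates gives $|Y(t,x,\xi)|\le C\prod_{i=1}^d t^{\zeta_d/d}|x_i-\xi_i|^{\kappa_d/d}p(t,x_i-\xi_i)$, which is \eqref{e.Y-product-b}. I do not expect any genuine obstacle here: the only points that need attention are verifying that $\kappa_d\le0$ (so the AM--GM comparison is used in the correct direction) and keeping track of the fact that the constant $\sigma$ in the exponential is degraded to $\sigma/d$, which is permissible under the standing convention that $\sigma$ is a generic positive constant allowed to change from line to line.
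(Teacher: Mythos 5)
Your proof is correct and follows essentially the same route as the paper: start from Lemma \ref{lem5}, split the power of $t$ trivially, use the nonpositivity of $\kappa_d$ to dominate $|x-\xi|^{\kappa_d}$ by the product $\prod_i|x_i-\xi_i|^{\kappa_d/d}$, and compare $|x-\xi|^{2/(2-\alpha)}$ with $\frac1d\sum_i|x_i-\xi_i|^{2/(2-\alpha)}$ at the cost of replacing $\sigma$ by $\sigma/d$. The only cosmetic difference is that you route the spatial-power comparison through AM--GM and Cauchy--Schwarz where the paper uses $|x|\ge\max_i|x_i|\ge\prod_i|x_i|^{1/d}$ directly (and the restriction $q>1$ you invoke for the exponential step is not actually needed, since $\sum_i|a_i|^q\le d(\max_i|a_i|)^q$ for every $q>0$).
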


\begin{proof} It is easy to see that
\[
|x|=\left(\sum_{i=1}^d x_i^2\right)^{1/2}
\ge \max_{1\le i\le d} |x_i|\geq  \prod_{i=1}^{d}|x_i|^{\frac 1d}\,.
\]
Thus for any positive number $\alpha>0$, $|x|^{-\alpha
}\leq  \prod_{i=1}^{d}|x_i|^{-\frac{\alpha}{d}}$.

On the other hand,
\begin{eqnarray*}
|x|^\frac{2}{2-\alpha}
&=&\bigg[\sum_{i=1}^d|x_i|^2\bigg]^\frac{1}{2-\alpha}
\ge   \bigg[ \max_{1\le i\le d}  |x_i|^2\bigg]^\frac{1}{2-\alpha}\\
&=&   \max_{1\le i\le d}  |x_i|^\frac{2}{2-\alpha} \ge
\frac{1}{d} \sum_{i=1}^{d}|x_i|^\frac{2}{2-\alpha}.
\end{eqnarray*}
Combining the above with \eqref{e.Y-bound} yields
\eqref{e.Y-product-b}  since the exponents in $|x-\xi|$
in \eqref{e.Y-bound} are negative.
\end{proof}

\begin{Lemma}\label{lem9}
Let $-1<\beta\leq0,  x\in \mathbb{R}$.    Then, there is a constant $C$, dependent on
$\sigma$,  $\alpha$ and $\beta$,  but independent of $\xi$ and $s$ such that
\[
\sup_{\xi\in \mathbb{R }}\int_{\mathbb{R}}|x|^{\beta}
 p(s, x-\xi)d x\leq C s^{\frac{\alpha\beta}{2}+\frac{\alpha}{2}}\,.
\]
\end{Lemma}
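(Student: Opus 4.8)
The plan is to reduce everything to a scale-free estimate using the natural parabolic rescaling attached to $p$. Recall $p(s,x)=\exp\bigl(-\sigma s^{-\frac{\alpha}{2-\alpha}}|x|^{\frac{2}{2-\alpha}}\bigr)$ and set $r=\frac{2}{2-\alpha}$, which lies in $(1,2)$ since $\alpha\in(0,1)$. In the integral $\int_{\mathbb R}|x|^\beta p(s,x-\xi)\,dx$ I substitute $x=s^{\alpha/2}w$ and write $\xi=s^{\alpha/2}\eta$. Using the identity $s^{-\frac{\alpha}{2-\alpha}}\bigl(s^{\alpha/2}\bigr)^{r}=1$, the factor $p(s,x-\xi)$ becomes $\exp(-\sigma|w-\eta|^{r})$, the measure contributes $s^{\alpha/2}\,dw$, and $|x|^\beta$ contributes $s^{\alpha\beta/2}|w|^\beta$. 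Hence
\[
\int_{\mathbb R}|x|^\beta p(s,x-\xi)\,dx = s^{\frac{\alpha\beta}{2}+\frac{\alpha}{2}}\int_{\mathbb R}|w|^\beta\exp\bigl(-\sigma|w-\eta|^{r}\bigr)\,dw ,
\]
so it suffices to bound $I(\eta):=\int_{\mathbb R}|w|^\beta\exp(-\sigma|w-\eta|^{r})\,dw$ uniformly in $\eta\in\mathbb R$.

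To estimate $I(\eta)$ I would split the domain of integration at $|w|=1$. On $\{|w|\le 1\}$ I bound the exponential factor by $1$ and use the hypothesis $\beta>-1$ to get $\int_{|w|\le 1}|w|^\beta\,dw=\frac{2}{1+\beta}<\infty$, a bound free of $\eta$. On $\{|w|>1\}$ I use the hypothesis $\beta\le 0$ to bound $|w|^\beta\le 1$, so this part is at most $\int_{\mathbb R}\exp(-\sigma|w-\eta|^{r})\,dw=\int_{\mathbb R}\exp(-\sigma|u|^{r})\,du$, which is finite (as $r>0$) and independent of $\eta$. Adding the two contributions gives $\sup_{\eta\in\mathbb R}I(\eta)\le \frac{2}{1+\beta}+\int_{\mathbb R}\exp(-\sigma|u|^{r})\,du=:C<\infty$, a constant depending only on $\sigma$, on $\alpha$ (through $r=\frac{2}{2-\alpha}$), and on $\beta$. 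Substituting back into the displayed identity yields the claimed inequality with this $C$.

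There is no genuine obstacle here; the only points that need care are (a) the scaling bookkeeping, so that the resulting power of $s$ comes out to be exactly $\frac{\alpha\beta}{2}+\frac{\alpha}{2}$ with no residual power of $s$ hidden inside the $w$-integral, and (b) the fact that the two hypotheses on $\beta$ are each used exactly once: $\beta>-1$ gives local integrability of $|w|^\beta$ near the origin, while $\beta\le 0$ makes $|w|^\beta$ harmless at infinity, where it is absorbed by the stretched-exponential decay of $p$. This also explains why the range $-1<\beta\le 0$ is the natural one for the statement.
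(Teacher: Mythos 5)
Your proof is correct and follows essentially the same route as the paper: the substitution $x=s^{\alpha/2}w$ (the paper's $y$ playing the role of your $w$, with $\eta=\xi/s^{\alpha/2}$), followed by the split at $|w|=1$ using $\beta>-1$ for integrability near the origin and $\beta\le 0$ together with the stretched-exponential decay at infinity. The scaling bookkeeping and the resulting constant match the paper's argument exactly.
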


\begin{proof}
Making the substitution  $x= {y}{s^{\frac{\alpha}{2}}}$ we obtain
\begin{eqnarray*}
\int_{\mathbb{R}}|x|^{\beta}  p(s, x-\xi)d x
&=&s^{\frac{\alpha\beta}{2}+\frac{\alpha}{2}}\int_{\mathbb{R}}|y|^{\beta}\cdot \exp\bigg(-\sigma\bigg|y-\frac{\xi}{s^{\frac{\alpha}{2}}}\bigg|^{\frac2{2-\alpha}}\bigg)d y\\
&\leq &s^{\frac{\alpha\beta}{2}+\frac{\alpha}{2}} \bigg(\int_{|y|\leq 1}
|y|^{\beta} dy+\int_{\mathbb{R}} \exp\bigg(-\sigma\bigg|y-\frac{\xi}{s^{\frac{\alpha}{2}}}\bigg|^{\frac2{2-\alpha}}
\bigg)d y \bigg)\\
&\leq&  C s^{\frac{\alpha\beta}{2}+\frac{\alpha}{2}}
\end{eqnarray*}
since the two integrals inside the parenthesis  are finite
(and independent of $s$ and $\xi$).
\end{proof}

The following   is a slight extension of the above lemma.
\begin{Lemma}\label{lema11}  There is a constant $C$, dependent on
$\sigma$, $\alpha$ and $\beta$, but independent of $\xi$ and $s$ such that
\[
\sup_{\xi\in \mathbb{R }}\int_{\mathbb{R}}|x|^{\beta}
 \left| \log |x| \right|
 p(s, x-\xi)d x\leq C s^{\frac{\alpha\beta}{2}+\frac{\alpha}{2}} \left[1+\left|\log s\right|\right]\,.
\]
\end{Lemma}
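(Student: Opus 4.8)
The plan is to mimic the scaling argument behind Lemma~\ref{lem9}. Substituting $x=y\,s^{\alpha/2}$ gives $dx=s^{\alpha/2}\,dy$, $|x|^{\beta}=s^{\alpha\beta/2}|y|^{\beta}$ and $\log|x|=\log|y|+\tfrac{\alpha}{2}\log s$, so that, writing $\eta:=\xi\,s^{-\alpha/2}$,
\[
\int_{\mathbb{R}}|x|^{\beta}\bigl|\log|x|\bigr|\,p(s,x-\xi)\,dx
 =s^{\frac{\alpha\beta}{2}+\frac{\alpha}{2}}\int_{\mathbb{R}}|y|^{\beta}\Bigl|\log|y|+\tfrac{\alpha}{2}\log s\Bigr|\,\exp\!\Bigl(-\sigma\bigl|y-\eta\bigr|^{\frac{2}{2-\alpha}}\Bigr)dy .
\]
By the triangle inequality $\bigl|\log|y|+\tfrac{\alpha}{2}\log s\bigr|\le\bigl|\log|y|\bigr|+\tfrac{\alpha}{2}|\log s|$, and the piece carrying $\tfrac{\alpha}{2}|\log s|$ is dealt with immediately: by Lemma~\ref{lem9} applied with $s=1$ (i.e.\ with the kernel $\exp(-\sigma|\cdot|^{2/(2-\alpha)})=p(1,\cdot)$) one has $\sup_{\eta}\int_{\mathbb{R}}|y|^{\beta}\exp(-\sigma|y-\eta|^{2/(2-\alpha)})\,dy\le C$, which produces the term $C\,s^{\frac{\alpha\beta}{2}+\frac{\alpha}{2}}|\log s|$ on the right-hand side. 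Everything therefore reduces to the uniform bound
\[
\sup_{\eta\in\mathbb{R}}\int_{\mathbb{R}}|y|^{\beta}\bigl|\log|y|\bigr|\,\exp\!\Bigl(-\sigma\bigl|y-\eta\bigr|^{\frac{2}{2-\alpha}}\Bigr)dy<\infty .
\]

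To prove this I would split the $y$-integral into $\{|y|\le1\}$ and $\{|y|>1\}$. On $\{|y|\le1\}$ one bounds the exponential by $1$; since $\beta>-1$, the function $|y|^{\beta}\bigl|\log|y|\bigr|$ is integrable near the origin (the logarithmic factor does not spoil integrability), so this part contributes a finite constant independent of $\eta$. On $\{|y|>1\}$ the obstruction is the now unbounded factor $\bigl|\log|y|\bigr|$; here I would use the elementary inequality $\bigl|\log|y|\bigr|\le C_{\delta}\bigl(|y|^{\delta}+|y|^{-\delta}\bigr)$, valid for every $\delta>0$, and choose $\delta>0$ small enough that both $\beta+\delta$ and $\beta-\delta$ remain in the range $(-1,0]$ (this uses $\beta<0$, which is precisely where the negative power in the weight buys the needed room). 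Then $|y|^{\beta}\bigl|\log|y|\bigr|\le C_{\delta}\bigl(|y|^{\beta+\delta}+|y|^{\beta-\delta}\bigr)$ on $\{|y|>1\}$, and each of
\[
\int_{|y|>1}|y|^{\beta\pm\delta}\exp\!\Bigl(-\sigma\bigl|y-\eta\bigr|^{\frac{2}{2-\alpha}}\Bigr)dy\le\int_{\mathbb{R}}|y|^{\beta\pm\delta}\,p(1,y-\eta)\,dy\le C
\]
is bounded uniformly in $\eta$ by another application of Lemma~\ref{lem9}. Adding the two contributions gives the displayed uniform bound, and hence the lemma, with $C$ depending only on $\sigma,\alpha,\beta$ (the $\beta$-dependence entering through the admissible choice of $\delta$).

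The one genuinely new step beyond Lemma~\ref{lem9}, and the one I expect to require the most care, is the control of $\bigl|\log|y|\bigr|$ on $\{|y|>1\}$ uniformly in the translation parameter $\eta$: near the centre $y\approx\eta$ of the kernel this factor is of order $\log|\eta|$, so it cannot be absorbed into a constant, and one must trade the logarithmic growth for an arbitrarily small power of $|y|$ while keeping the resulting exponent inside the admissible window $(-1,0]$ of Lemma~\ref{lem9}. Everything else — the change of variables, the triangle inequality splitting off $\tfrac{\alpha}{2}\log s$, and the collection of the powers of $s$ — is the same bookkeeping as in the proof of Lemma~\ref{lem9}.
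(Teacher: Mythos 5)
Your proof is correct and follows essentially the same route as the paper's: the scaling substitution $x=y\,s^{\alpha/2}$, the triangle inequality splitting off $\tfrac{\alpha}{2}\log s$, and a near/far decomposition in $y$ where the near-origin piece is handled by local integrability of $|y|^{\beta}|\log|y||$ (using $\beta>-1$) and the far piece by absorbing the logarithm using $\beta<0$ --- the paper simply bounds $|y|^{\beta}|\log|y||$ by a constant on $|y|\ge e$, while you trade the log for $|y|^{\pm\delta}$ and reapply Lemma~\ref{lem9}, which is the same mechanism. It is worth noting that both arguments genuinely require $\beta<0$ (for $\beta=0$ the claimed bound fails as $|\xi|\to\infty$), so your explicit flagging of that restriction is, if anything, more careful than the paper's write-up.
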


\begin{proof} We shall follow the same idea as in the proof of Lemma \ref{lem9}.
Making the substitution  $x= {y}{s^{\frac{\alpha}{2}}}$ we obtain
\begin{eqnarray*}
&&\int_{\mathbb{R}} |x|^{\beta}
 \left| \log |x| \right|  p(s, x-\xi)d x\\
&  & \qquad \le C s^{\frac{\alpha\beta}{2}+\frac{\alpha}{2}}    \int_{\mathbb{R}}
|y|^{\beta}\left[ |\log |y| |+|\log s | \right]
\cdot \exp\bigg(-\sigma\bigg|y-\frac{\xi}{s^{\frac{\alpha}{2}}}\bigg|^{\frac2{2-\alpha}}\bigg)d y\\
&  & \qquad \le Cs^{\frac{\alpha\beta}{2}+\frac{\alpha}{2}}   (1+|\log s|)\bigg(\int_{|y|\leq e}
|y|^{\beta} |\log |y| | dy+\int_{\mathbb{R}} \exp\bigg(-\sigma\bigg|y-\frac{\xi}{s^{\frac{\alpha}{2}}}\bigg|^{\frac2{2-\alpha}}
\bigg)d y \bigg)\\
&  & \qquad \le     Cs^{\frac{\alpha\beta}{2}+\frac{\alpha}{2}}   (1+|\log s|)\,.
\end{eqnarray*}
This proves the lemma.
\end{proof}



\begin{Lemma}\label{lemma11}
Let   $\theta_1$ and $\theta_2$ satisfy
$-1<\theta_1< 0, -1<\theta_2\leq 0$.  Then for any
$\rho_1, \tau_2 \in \mathbb{R},  \rho_1\neq\tau_2$,
\begin{enumerate}
\item\ If $\theta_1+\theta_2=-1$,   then
\[
\int_{\mathbb{R}}|\rho_1-\tau_1|^{\theta_1}|\rho_2-\rho_1|^{\theta_2}p(s_2-s_1, \rho_2-\rho_1)d\rho_1\leq C+ C|\log(\rho_2-\tau_1)|\,.
\]
\item\  If $\theta_1+\theta_2<-1, $ then
$$\int_{\mathbb{R}}|\rho_1-\tau_1|^{\theta_1}|\rho_2-\rho_1|^{\theta_2}p(s_2-s_1, \rho_2-\rho_1)d\rho_1\leq C|\rho_2-\tau_1|^{1+\theta_1+\theta_2}.$$
\end{enumerate}
\end{Lemma}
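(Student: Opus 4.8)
The plan is to separate the real line into regions where one of the two factors $|\rho_1-\tau_1|^{\theta_1}$ or $|\rho_2-\rho_1|^{\theta_2}$ is under control, and to treat the Gaussian-type weight $p(s_2-s_1,\rho_2-\rho_1)$ as a decaying function that kills the contribution at infinity. Write $a=\tau_1$, $b=\rho_2$ and $r=|b-a|$, and split $\mathbb{R}=I_1\cup I_2\cup I_3$ where $I_1=\{|\rho_1-a|\le r/2\}$, $I_2=\{|\rho_1-b|\le r/2\}$ and $I_3$ is the complement. On $I_3$ both $|\rho_1-a|$ and $|\rho_1-b|$ are comparable to $\max(|\rho_1-a|,|\rho_1-b|)$, so the integrand there is bounded by $C|\rho_1-b|^{\theta_1+\theta_2}p(s_2-s_1,\rho_2-\rho_1)$; since $\theta_1+\theta_2\le -1$, the negative power is integrable against $p$ near $\rho_1=b$ only marginally, but here we are bounded away from $b$ by $r/2$, so in fact $|\rho_1-b|^{\theta_1+\theta_2}\le C$ if we are also away from infinity, and the tail $|\rho_1|\to\infty$ is absorbed by $p$. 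A cleaner route on $I_3$: bound $|\rho_1-a|^{\theta_1}\le C$ near $b$-side and vice versa, then apply Lemma \ref{lem9} with exponent $\theta_2$ (resp. $\theta_1$) — this yields a contribution of order $(s_2-s_1)^{\alpha\theta/2+\alpha/2}$, which is a finite constant, hence $\le C$.

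On $I_1$, we have $|\rho_2-\rho_1|\ge r/2$, so $|\rho_2-\rho_1|^{\theta_2}\le C r^{\theta_2}$ (recall $\theta_2\le 0$) and also $p(s_2-s_1,\rho_2-\rho_1)\le C$; the integral reduces to $C r^{\theta_2}\int_{|\rho_1-a|\le r/2}|\rho_1-a|^{\theta_1}d\rho_1 \le C r^{\theta_2}\, r^{1+\theta_1}=C r^{1+\theta_1+\theta_2}$, using $\theta_1>-1$ so the power is integrable at $\rho_1=a$. This already produces the bound $C|\rho_2-\tau_1|^{1+\theta_1+\theta_2}$ claimed in case (ii), and — when $\theta_1+\theta_2=-1$ — gives exactly $Cr^0=C$, consistent with case (i). Symmetrically, on $I_2$ we have $|\rho_1-a|\ge r/2$ so $|\rho_1-a|^{\theta_1}\le C r^{\theta_1}$, and we are left with $C r^{\theta_1}\int_{|\rho_1-b|\le r/2}|\rho_1-b|^{\theta_2}p(s_2-s_1,\rho_2-\rho_1)\,d\rho_1$. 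When $\theta_2>-1$ this integral is at most $\min\{Cr^{1+\theta_2},\ C(s_2-s_1)^{\alpha\theta_2/2+\alpha/2}\}$ by direct integration or by Lemma \ref{lem9}; the first bound gives $Cr^{1+\theta_1+\theta_2}$. The only borderline subtlety is in case (i), $\theta_1+\theta_2=-1$: here on $I_2$ one cannot simply write $r^{1+\theta_2}=r^{-\theta_1}$ and multiply by $r^{\theta_1}$ to get $C$, because that is fine — it does give $C$ — but the logarithm in the statement must come from somewhere, namely from the region where $r$ is small and $\rho_1$ ranges over a set of size $\gg r$ yet $|\rho_1-a|$ and $|\rho_1-b|$ are both $\lesssim 1$; splitting that annulus $r\le |\rho_1-a|\le 1$ and integrating $|\rho_1-a|^{\theta_1}|\rho_1-b|^{\theta_2}\le C|\rho_1-a|^{\theta_1+\theta_2}=C|\rho_1-a|^{-1}$ produces $C\log(1/r)=C|\log(\rho_2-\tau_1)|$ (up to an additive constant), while the region $|\rho_1-a|\ge 1$ is handled by $p$ as above.

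The main obstacle is bookkeeping the logarithmic term in case (i) cleanly: one must verify that the annular region $\{c_1 r\le |\rho_1-\tau_1|\le c_2\}$ (on which both power factors behave like $|\rho_1-\tau_1|^{\theta_1+\theta_2}=|\rho_1-\tau_1|^{-1}$) is exactly what generates $|\log(\rho_2-\tau_1)|$, and that both the inner region $|\rho_1-\tau_1|\lesssim r$ (treated via case-(ii)-type estimate giving $Cr^{1+\theta_1+\theta_2}=C$) and the outer region $|\rho_1-\tau_1|\gtrsim 1$ (treated via Lemma \ref{lem9}) contribute only bounded terms. Once the three-region decomposition is fixed, the estimates on $I_1,I_2$ are elementary and the estimate on the far region is a direct application of Lemma \ref{lem9} (or Lemma \ref{lema11} if an extra log ever appears). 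I would therefore organize the write-up as: (1) fix notation and the decomposition; (2) dispose of the far region using Lemma \ref{lem9}; (3) compute $I_1$ and $I_2$ by pulling out the factor that is bounded below; (4) in case (i) only, isolate the logarithmic annulus; (5) collect terms and note that $\theta_1+\theta_2\le -1$ makes the power $1+\theta_1+\theta_2\le 0$, so $|\rho_2-\tau_1|^{1+\theta_1+\theta_2}$ dominates the constant terms when $r\le 1$, while for $r\ge 1$ the constant terms dominate and are themselves $\le C\le C r^{1+\theta_1+\theta_2}$ fails — so one should restrict attention to $r$ bounded, which is legitimate since the Gaussian weight forces the relevant scale, or simply state the bound with the understanding that $C$ may absorb the $r\ge 1$ case via $p$.
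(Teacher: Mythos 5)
Your decomposition is the same as the paper's: the paper splits $\mathbb{R}$ at the four points $\tau_1\pm\frac{r}{2}$ and $\rho_2\pm\frac{r}{2}$ (with $r=|\rho_2-\tau_1|$), treats the two middle intervals exactly as you treat $I_1$ and $I_2$ (pull out the factor bounded below by $(r/2)^{\theta}$, drop $p\le 1$, integrate the remaining power to get $Cr^{1+\theta_1+\theta_2}$), and handles the two tails via $|\rho_1-\tau_1|^{\theta_1}|\rho_2-\rho_1|^{\theta_2}\le |\rho_1-\tau_1|^{\theta_1+\theta_2}$, integrating the pure power when $\theta_1+\theta_2<-1$ and splitting at distance $1$ from $\tau_1$ when $\theta_1+\theta_2=-1$ to extract the logarithm from the annulus $r\lesssim|\rho_1-\tau_1|\le 1$ and to use the decay of $p$ beyond it. That is precisely the content of your third paragraph, which is correct.

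Two corrections to your far-region discussion are needed before this becomes a proof. First, the ``cleaner route'' in your first paragraph is wrong: on the tails you only know $|\rho_1-\tau_1|\ge r/2$ and $|\rho_1-\rho_2|\ge r/2$, so neither $|\rho_1-\tau_1|^{\theta_1}\le C$ nor $|\rho_1-\rho_2|^{\theta_1+\theta_2}\le C$ holds when $r$ is small; you must keep the full power $|\rho_1-\tau_1|^{\theta_1+\theta_2}$ there. Second, once you do so, the worry in your item (5) about $r\ge 1$ evaporates: in case (ii) the tail contribution is $\int_{|\rho_1-\tau_1|\ge r/2}|\rho_1-\tau_1|^{\theta_1+\theta_2}\,d\rho_1=Cr^{1+\theta_1+\theta_2}$ for \emph{every} $r$, since $\theta_1+\theta_2<-1$ makes the power integrable at infinity — there is no need for Lemma \ref{lem9} on the tails and no leftover constant to absorb. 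With these fixes your argument coincides with the paper's proof.
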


\begin{proof}
Without loss of generality,  suppose $\tau_1\leq\rho_2$.  We divide the integral domain into four  intervals.
\begin{eqnarray*}
&&\int_{\mathbb{R}}|\rho_1-\tau_1|^{\theta_1}|\rho_2-\rho_1|^{\theta_2}p(s_2-s_1, \rho_2-\rho_1)d\rho_1\\
&=&\int_{-\infty}^{\frac{3\tau_1-\rho_2}{2}}|\rho_1-\tau_1|^{\theta_1}|\rho_2-\rho_1|^{\theta_2}p(s_2-s_1, \rho_2-\rho_1)d\rho_1\\
&& +\int_{\frac{3\tau_1-\rho_2}{2}}^{\frac{\tau_1+\rho_2}{2}}|\rho_1-\tau_1|^{\theta_1}|\rho_2-\rho_1|^{\theta_2}p(s_2-s_1, \rho_2-\rho_1)d\rho_1\\
&& +\int^{\frac{3\rho_2-\tau_1}{2}}_{\frac{\tau_1+\rho_2}{2}}|\rho_1-\tau_1|^{\theta_1}|\rho_2-\rho_1|^{\theta_2}p(s_2-s_1, \rho_2-\rho_1)d\rho_1\\
&&  +\int_{\frac{3\rho_2-\tau_1}{2}}^{\infty}|\rho_1-\tau_1|^{\theta_1}|\rho_2-\rho_1|^{\theta_2}p(s_2-s_1, \rho_2-\rho_1)d\rho_1\\
&=:&I_1+I_2+I_3+I_4\,.
\end{eqnarray*}
Let us consider $I_2$ first.  When $\displaystyle \rho_1\in
\left[{\frac{3\tau_1-\rho_2}{2}},  {\frac{\tau_1+\rho_2}{2}}\right]$,  we have
$  |\rho_2-\rho_1| \ge \frac{\rho_2-\tau_1}{2}$.   Noticing
$p(s_2-s_1, \rho_2-\rho_1)\le 1$,  we  have the following estimate for $I_2$:
%
\begin{eqnarray*}
I_2&\leq&\bigg(\frac{\rho_2-\tau_1}{2}\bigg)^{\theta_2}\int_{\frac{3\tau_1-\rho_2}{2}}^{\frac{\tau_1+\rho_2}{2}}|\rho_1-\tau_1|^{\theta_1}d\rho_1\\
&\leq& \bigg(\frac{\rho_2-\tau_1}{2}\bigg)^{\theta_2}
\left[
\int_{\tau_1}^{\frac{\tau_1+\rho_2}{2}}(\rho_1-\tau_1)^{\theta_1}d\rho_1
+\int_{\frac{3\tau_1-\rho_2}{2}}
^{\tau_1}(  \tau_1-\rho_1) ^{\theta_1}d\rho_1\right]\\
& = &C\bigg( {\rho_2-\tau_1} \bigg)^{1+\theta_1+\theta_2}\,.
\end{eqnarray*}
With the  same argument, we have
\[
I_3\le
C\bigg( {\rho_2-\tau_1} \bigg)^{1+\theta_1+\theta_2}\,.
\]

Now, we  study $I_1$.  The term  $I_4$ can be analyzed in a similar way.
Since $\rho_1< \frac{3\tau_1-\rho_2}{2}< \tau_1< \rho_2$,  we have
\[
I_1\leq \int_{-\infty}^{\frac{3\tau_1-\rho_2}{2}}(\tau_1-\rho_1)^{\theta_1+\theta_2}p(s_2-s_1, \rho_2-\rho_1)d\rho_1\,.
\]
To estimate the above integral, we divide our estimation into three cases.

\noindent{\sl Case i): \  $\theta_1+\theta_2<-1$}.  \quad

In this case, we bound $p(s_2-s_1, \rho_2-\rho_1)$ by $1$.  Thus, we have
\[
I_1\leq \int_{-\infty}^{\frac{3\tau_1-\rho_2}{2}}(\tau_1-\rho_1)^{\theta_1+\theta_2}d\rho_1= \frac{1}{1+\theta_1+\theta_2}\bigg(\frac{\rho_2-\tau_1}{2}\bigg)^{1+\theta_1+\theta_2}\,.
\]

\noindent{\sl Case ii):\   $\theta_1+\theta_2=-1, \frac{\rho_2-\tau_1}{2}\geq 1$}.

In this case, we have $\frac{3\tau_1-\rho_2}{2}\le \tau_1-1$.  Thus, we have
\begin{eqnarray*}
I_1
&\leq& \int_{-\infty}^{\tau_1-1} (\tau_1-\rho_1)^{-1}p(s_2-s_1, \rho_2-\rho_1)d\rho_1\\
&\leq &\int_{-\infty}^{\tau_1-1} p(s_2-s_1, \rho_2-\rho_1)d\rho_1 \\
&\leq &\int_{-\infty}^{\infty } p(s_2-s_1, \rho_2-\rho_1)d\rho_1
\end{eqnarray*}
which is bounded when $s_1$ and $s_2$ are  in a bounded domain.

\noindent{\sl Case iii): \   $\theta_1+\theta_2=-1, \frac{\rho_2-\tau_1}{2}<1$}.

In this case, we divide the integral into two intervals as follows.
\begin{eqnarray*}
I_1&=& \int_{-\infty}^{\frac{3\tau_1-\rho_2}{2}}(\tau_1-\rho_1)^{\theta_1+\theta_2}p(s_2-s_1, \rho_2-\rho_1)d\rho_1\\
&\leq& \int_{-\infty}^{\tau_1-1} (\tau_1-\rho_1)^{-1}p(s_2-s_1, \rho_2-\rho_1)d\rho_1 + \int_{\tau_1-1}^{\frac{3\tau_1-\rho_2}{2}} (\tau_1-\rho_1)^{-1}p(s_2-s_1, \rho_2-\rho_1)d\rho_1\\
&\leq& C+ \int_{\tau_1-1}^{\frac{3\tau_1-\rho_2}{2}} (\tau_1-\rho_1)^{-1}d\rho_1\\
&\le& C+C|\ln(\rho_2-\tau_1)|\,.
\end{eqnarray*}
%
%
%
Similar argument works for $I_4$.  Combining the estimates for $I_k$,$k=1, 2, 3, 4$ yields the lemma.
\end{proof}

\begin{Lemma}\label{l.13}
Let   $\theta_1$ and $\theta_2$ satisfy
$-1<\theta_1< 0, -1<\theta_2\leq 0$ and $\theta_1+2\theta_2>-2$.
Let $0\le r_1<r_2\le T$ and $0\le s_1<s_2\le T$. Then for any
$\rho_1, \tau_2 \in \mathbb{R},  \rho_1\neq\tau_2$,   we have
\begin{eqnarray}
&&\int_{\mathbb{R}^2}|\rho_1-\tau_1|^{\theta_1}|\rho_2-\rho_1|^{\theta_2}|\tau_2-\tau_1|^{\theta_2}p(s_2-s_1, \rho_2-\rho_1)p(r_2-r_1, \tau_2-\tau_1)d\rho_1 d\tau_1\nonumber\\
&\leq& \left\{
  \begin{array}{ll}
    $$  C (s_2-s_1)^{\frac{\alpha(\theta_1+\theta_2+1)}{2}  }(r_2-r_1)^{\frac{\alpha(\theta_2+1)}{2} }, $$  &  \hbox{$\theta_1+\theta_2>-1$;} \\
   $$ C (r_2-r_1)^{\frac{\alpha(\theta_1+2\theta_2+2)}{2} }, $$  & \hbox{$\theta_1+\theta_2<-1$;} \\
    $$C (r_2-r_1)^{ \frac{\alpha  (\theta_2 +1)}{2}  }
    \left[1+|\log (r_2-r_1)|\right] , $$ & \hbox{ $\theta_1+\theta_2=-1.$}
  \end{array}
\right.\label{e.double.int}
\end{eqnarray}
\end{Lemma}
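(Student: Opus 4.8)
The plan is to write the double integral as an iterated one — integrating first in $\rho_1$ with $\tau_1$ frozen, then in $\tau_1$ — and to feed each slice into a one–dimensional estimate. Besides the Lemmas already available I will use the following scaling bound, which I call $(\star)$: if $-1<\theta_1'\le 0$, $-1<\theta_2'\le 0$ and $\theta_1'+\theta_2'>-1$, then
\[
\sup_{a,b\in\mathbb{R}}\int_{\mathbb{R}}|a-z|^{\theta_1'}|b-z|^{\theta_2'}p(s,b-z)\,dz\le C\,s^{\frac{\alpha(\theta_1'+\theta_2'+1)}{2}}.
\]
This is proved exactly as Lemma \ref{lem9}: the substitution $z=b-s^{\alpha/2}w$ turns $p(s,b-z)$ into $\exp(-\sigma|w|^{2/(2-\alpha)})$ and gives $s^{\frac{\alpha(\theta_1'+\theta_2'+1)}{2}}\int_{\mathbb{R}}|w-c|^{\theta_1'}|w|^{\theta_2'}\exp(-\sigma|w|^{2/(2-\alpha)})\,dw$ with $c=s^{-\alpha/2}(b-a)$; the last integral is finite and bounded uniformly in $c$, because each of the singularities at $w=0$ and $w=c$ is integrable, where they coincide the joint singularity $|w|^{\theta_1'+\theta_2'}$ is integrable thanks to $\theta_1'+\theta_2'>-1$, and the Gaussian‑type weight handles infinity.

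With $(\star)$ the two generic cases are quick. If $\theta_1+\theta_2>-1$, then $(\star)$ (applied to the $\rho_1$‑integral, with $a=\tau_1$, $b=\rho_2$) bounds the inner integral by $C(s_2-s_1)^{\frac{\alpha(\theta_1+\theta_2+1)}{2}}$ uniformly in $\tau_1$, and then Lemma \ref{lem9} gives $\int_{\mathbb{R}}|\tau_2-\tau_1|^{\theta_2}p(r_2-r_1,\tau_2-\tau_1)\,d\tau_1\le C(r_2-r_1)^{\frac{\alpha(\theta_2+1)}{2}}$; multiplying yields the first line of \eqref{e.double.int}. If $\theta_1+\theta_2<-1$, then Lemma \ref{lemma11}(2) bounds the inner integral by $C|\rho_2-\tau_1|^{1+\theta_1+\theta_2}$, and since $-1<\theta_1,\theta_2$ one has $1+\theta_1+\theta_2\in(-1,0)$; the hypothesis $\theta_1+2\theta_2>-2$ is precisely what gives $(1+\theta_1+\theta_2)+\theta_2>-1$, so $(\star)$ applies to $\int_{\mathbb{R}}|\rho_2-\tau_1|^{1+\theta_1+\theta_2}|\tau_2-\tau_1|^{\theta_2}p(r_2-r_1,\tau_2-\tau_1)\,d\tau_1$ and delivers $C(r_2-r_1)^{\frac{\alpha(\theta_1+2\theta_2+2)}{2}}$, the second line.

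The case $\theta_1+\theta_2=-1$ is the heart of the matter, since Lemma \ref{lemma11}(1) now produces a logarithm and the bound must stay uniform in $\rho_2,\tau_2$. I would first sharpen Lemma \ref{lemma11}(1): reading its proof, the terms $I_1,\dots,I_4$ are all $O(1)$ once $|\rho_2-\tau_1|\ge 2$, so in fact the inner $\rho_1$‑integral is $\le C+C\log^-|\rho_2-\tau_1|$, where $\log^- t:=\max(0,-\log t)$ vanishes for $t\ge 1$. It then suffices to show $\int_{\mathbb{R}}\log^-|\rho_2-\tau_1|\,|\tau_2-\tau_1|^{\theta_2}p(r_2-r_1,\tau_2-\tau_1)\,d\tau_1\le C(r_2-r_1)^{\frac{\alpha(\theta_2+1)}{2}}[1+|\log(r_2-r_1)|]$, the $C$ part of the inner bound being handled by Lemma \ref{lem9} as before. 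This is a two‑point version of Lemma \ref{lema11}: the substitution $\tau_1=\tau_2-(r_2-r_1)^{\alpha/2}v$ reduces it to $(r_2-r_1)^{\frac{\alpha(\theta_2+1)}{2}}\int_{\mathbb{R}}\log^-\!\bigl((r_2-r_1)^{\alpha/2}|c-v|\bigr)|v|^{\theta_2}\exp(-\sigma|v|^{2/(2-\alpha)})\,dv$, and one uses $\log^-(t^{\alpha/2}m)\le\frac{\alpha}{2}|\log t|+\log^- m$ together with the fact that, because $\log^-$ is compactly supported, $\sup_c\int_{\mathbb{R}}\log^-|c-v|\,|v|^{\theta_2}\exp(-\sigma|v|^{2/(2-\alpha)})\,dv<\infty$ (split at $\{|v|\le|c-v|\}$ and its complement; on the first $\log^-|c-v|\le\log^-|v|$, on the second $|v|^{\theta_2}\le|c-v|^{\theta_2}$ and $\int_{\mathbb{R}}\log^-|c-v|\,|c-v|^{\theta_2}\,dv=\int_{|u|<1}\log^-|u|\,|u|^{\theta_2}\,du<\infty$). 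This produces the third line of \eqref{e.double.int}.

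The main obstacle is precisely this borderline case and, inside it, uniformity in $\rho_2$ and $\tau_2$: the stated $|\log|\rho_2-\tau_1||$ bound of Lemma \ref{lemma11}(1) is too lossy at infinity, and the resolution is to carry the sharper $\log^-$ form and exploit its compact support. (An alternative is to split $\rho_1$ according to whether $|\rho_1-\tau_1|\le 1$ or $>1$, use exponents $\theta_1+\varepsilon$ and $\theta_1-\varepsilon$ respectively so the two pieces fall into the two generic cases, and then optimize $\varepsilon=1/|\log(r_2-r_1)|$; this also produces the factor $1+|\log(r_2-r_1)|$.) Finally, the role of the hypothesis $\theta_1+2\theta_2>-2$ is to guarantee both convergence of the original double integral near the simultaneous collision $\rho_1=\tau_1$, $\rho_2=\tau_2$ and the applicability of $(\star)$ in the case $\theta_1+\theta_2<-1$; in the other two cases $\theta_1+2\theta_2>-2$ is automatic from $\theta_2>-1$.
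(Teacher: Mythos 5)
Your proposal is correct and follows the same overall route as the paper: integrate first in $\rho_1$ for fixed $\tau_1$, feed the result into a one-dimensional bound in $\tau_1$, and split into the three cases according to the sign of $\theta_1+\theta_2+1$. Two points of genuine difference are worth recording. First, where the paper bounds the inner integral $\int_{\mathbb{R}}|\rho_1-\tau_1|^{\theta_1}|\rho_2-\rho_1|^{\theta_2}p(s_2-s_1,\rho_2-\rho_1)\,d\rho_1$ by H\"older's inequality with conjugate exponents $\frac{\theta_1+\theta_2}{\theta_1},\frac{\theta_1+\theta_2}{\theta_2}$ followed by Lemma \ref{lem9}, you prove the two-singularity scaling bound $(\star)$ directly; the exponents produced are identical, so this is a cosmetic (arguably cleaner) variant, and your $(\star)$ also replaces the paper's second H\"older step in the case $\theta_1+\theta_2<-1$. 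Second, and more substantively, in the borderline case $\theta_1+\theta_2=-1$ the paper passes from the bound $C+C\bigl|\log|\rho_2-\tau_1|\bigr|$ of Lemma \ref{lemma11}(1) directly to Lemma \ref{lema11}; as you observe, this is not a legitimate application, since in Lemma \ref{lema11} the logarithm and the power weight share the same center, whereas here the logarithm is centered at $\rho_2$ while $|\tau_2-\tau_1|^{\theta_2}$ and $p(r_2-r_1,\tau_2-\tau_1)$ are centered at $\tau_2$, and the crude bound $1+|\log|\rho_2-\tau_1||$ grows at infinity, so uniformity of the constant in $\rho_2,\tau_2$ would fail. Your repair --- extracting from the proof of Lemma \ref{lemma11}(1) that the logarithmic term is only present when $|\rho_2-\tau_1|$ is small, i.e.\ the sharper bound $C+C\log^-|\rho_2-\tau_1|$ with $\log^-$ compactly supported, and then proving the two-center analogue of Lemma \ref{lema11} by the scaling substitution together with $\log^-(t^{\alpha/2}m)\le\frac{\alpha}{2}|\log t|+\log^- m$ and the uniform bound $\sup_c\int\log^-|c-v|\,|v|^{\theta_2}e^{-\sigma|v|^{2/(2-\alpha)}}dv<\infty$ --- is correct and closes a real gap in the paper's write-up; your alternative via interpolating exponents $\theta_1\pm\varepsilon$ with $\varepsilon\sim 1/|\log(r_2-r_1)|$ would work as well and is in the spirit of how the paper later uses the $\varepsilon$-version in Corollary \ref{c.14}.
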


\begin{proof}
First, we write
\begin{eqnarray}
I&:=&\int_{\mathbb{R}^2}|\rho_1-\tau_1|^{\theta_1}|\rho_2-\rho_1|^{\theta_2}|\tau_2-\tau_1|^{\theta_2}p(s_2-s_1, \rho_2-\rho_1)p(r_2-r_1, \tau_2-\tau_1)d\rho_1d\tau_1\nonumber\\
&=&\int_{\mathbb{R}} f(\tau_1,   \rho_2, s_1, s_2, \theta_1, \theta_2) |\tau_2-\tau_1|^{\theta_2}p(r_2-r_1, \tau_2-\tau_1)d\tau_1\,,\label{e.4.4}
\end{eqnarray}
where
\[
f(\tau_1,   \rho_2, s_1, s_2, \theta_1, \theta_2)= \int_{\mathbb{R}}|\rho_1-\tau_1|^{\theta_1}|\rho_2-\rho_1|^{\theta_2}p(s_2-s_1, \rho_2-\rho_1)d\rho_1 \,.
\]

We divide the situation into three cases.

\noindent{\sl Case i):\    $\theta_1+\theta_2>-1$}.

In this case we apply the  H\"{o}lder's inequality to obtain
\begin{eqnarray}
f(\tau_1,   \rho_2, s_1, s_2, \theta_1, \theta_2)
&\leq& \bigg\{\int_{\mathbb{R}}|\rho_1-\tau_1|^{\theta_1+\theta_2}p(s_2-s_1, \rho_2-\rho_1)d\rho_1 \bigg\}^{\frac{\theta_1}{\theta_1+\theta_2}}\nonumber\\
&&\qquad
\cdot\bigg\{\int_{\mathbb{R}}|\rho_2-\rho_1|^{\theta_1+\theta_2}p(s_2-s_1, \rho_2-\rho_1)d\rho_1\bigg\}^{\frac{\theta_2}{\theta_1+\theta_2}} \nonumber\\
&\leq&  C (s_2-s_1)^{\frac{\alpha(\theta_1+\theta_2)}{2}+\frac{\alpha}{2}} \,,
\label{e.4.5}
\end{eqnarray}
where the last inequality follows from   Lemma \ref{lem9}.
Substituting the above estimate \eqref{e.4.5} into \eqref{e.4.4},  we have
\begin{eqnarray*}
I&=& \int_{\mathbb{R}} f(\tau_1,   \rho_2, s_1, s_2, \theta_1, \theta_2) |\tau_2-\tau_1|^{\theta_2}p(r_2-r_1, \tau_2-\tau_1)d\tau_1\\
&\le&  C (s_2-s_1)^{\frac{\alpha(\theta_1+\theta_2)}{2}+\frac{\alpha}{2}}    \int_{\mathbb{R}} |\tau_2-\tau_1|^{\theta_2}p(r_2-r_1, \tau_2-\tau_1)d\tau_1\,.
\end{eqnarray*}

Using Lemma \ref{lem9} again     we have,  
\[
I\leq C (s_2-s_1)^{\frac{\alpha(\theta_1+\theta_2)}{2}+ \frac{\alpha}{2} }(r_2-r_1)^{\frac{\alpha\theta_2}{2}+ \frac{\alpha}{2} }\,.
\]

\noindent{\sl Case ii):\
 $\theta_1+\theta_2<-1$}.

In this case,  from Lemma \ref{lemma11},  part (ii)  \  it follows
\[
f(\tau_1,   \rho_2, s_1, s_2, \theta_1, \theta_2) \le C |\rho_2-\tau_1|^{\theta_1+\theta_2+1} \,.
\]
Hence, we have
\begin{eqnarray*}
I
&\le&
  C \int_{\mathbb{R}} |\rho_2-\tau_1|^{\theta_1+\theta_2+1} |\tau_2-\tau_1|^{\theta_2}p(r_2-r_1, \tau_2-\tau_1)d\tau_1\,.
\end{eqnarray*}
Now,  since from the condition of the lemma,
$\theta_1+2\theta_2+1>-1$,   we can use H\"{o}lder's inequality
such as in the inequality \eqref{e.4.5}   in the case (i),  to obtain
\begin{eqnarray*}
I
&\le& C \int_{\mathbb{R}} |\rho_2-\tau_1|^{\theta_1+\theta_2+1} |\tau_2-\tau_1|^{\theta_2}p(r_2-r_1, \tau_2-\tau_1)d\tau_1\\
&\leq &  C (r_2-r_1)^{\frac{\alpha(\theta_1+2\theta_2)}{2}+ \alpha }\,.
\end{eqnarray*}

\noindent{\sl Case iii):\      $\theta_1+\theta_2=-1$.}

In this case, we first use  Lemma \ref{lemma11},  part (i)  to obtain
\[
f(\tau_1,   \rho_2, s_1, s_2, \theta_1, \theta_2) \le
C  \left[1+  \left|\log|\rho_2-\tau_1|\right|\right]  \,.
\]
Thus, using Lemma \ref{lema11}, we have
\begin{eqnarray*}
I
&\leq& C \int_{\mathbb{R}}\{1+  \left|\log|\rho_2-\tau_1|
\right| \} |\tau_2-\tau_1|^{\theta_2}p(r_2-r_1, \tau_2-\tau_1)d\tau_1\\
&\leq & C (r_2-r_1)^{\frac{\alpha(\theta_2+1)}{2}  }\left[
1+\left|\log |r_2-r_1|\right| \right]\,.
\end{eqnarray*}
The lemma is then proved.
\end{proof}

\begin{Corollary}\label{c.14}
Let   $\theta_1$ and $\theta_2$ satisfy
$-1<\theta_1< 0, -1<\theta_2\leq 0$ and $\theta_1+2\theta_2>-2$.
Let $0\le r_1<r_2\le T$ and $0\le s_1<s_2\le T$.   Then for any
$\rho_1, \tau_2 \in \mathbb{R},  \rho_1\neq\tau_2$,   we have
\begin{eqnarray}
&&\int_{\mathbb{R}^2}|\rho_1-\tau_1|^{\theta_1}|\rho_2-\rho_1|^{\theta_2}|\tau_2-\tau_1|^{\theta_2}p(s_2-s_1, \rho_2-\rho_1)p(r_2-r_1, \tau_2-\tau_1)d\rho_1 d\tau_1\nonumber\\
&\leq&
\begin{cases}
C (s_2-s_1)^{\frac{\alpha(\theta_1+2\theta_2+2)}{4}  }(r_2-r_1)^{\frac{\alpha(\theta_1+2\theta_2+2)}{4}}\,;& \qquad \theta_1+\theta_2\not =-1\\
C (s_2-s_1)^{\frac{\alpha( \theta_2+1)}{4}  }(r_2-r_1)^{\frac{\alpha( \theta_2+1)}{4}}
\left[1+|\log(r_2-r_1)|+|\log(s_2-s_1)|\right]\,;& \qquad \theta_1+\theta_2  =-1\,.
\end{cases} \nonumber\\
\label{e.the-integral}
\end{eqnarray}
\end{Corollary}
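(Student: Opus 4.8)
The plan is to obtain Corollary \ref{c.14} from Lemma \ref{l.13} by exploiting a symmetry of the integrand rather than by redoing the estimates. The first step is to observe that the function integrated on the left-hand side of \eqref{e.the-integral} is invariant under the simultaneous interchange of the two groups of variables $(\rho_1,\rho_2,s_1,s_2)$ and $(\tau_1,\tau_2,r_1,r_2)$: the factor $|\rho_1-\tau_1|^{\theta_1}$ is fixed by $\rho_1\leftrightarrow\tau_1$, the two factors $|\rho_2-\rho_1|^{\theta_2}$ and $|\tau_2-\tau_1|^{\theta_2}$ are exchanged, and the two kernels $p(s_2-s_1,\rho_2-\rho_1)$ and $p(r_2-r_1,\tau_2-\tau_1)$ are exchanged. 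Since the domain $\mathbb{R}^2$ is also invariant under $\rho_1\leftrightarrow\tau_1$, the integral on the left of \eqref{e.the-integral}, call it $I$, satisfies in each of the three regimes of Lemma \ref{l.13} not only the bound stated there but also the bound obtained from it by interchanging $s_2-s_1$ with $r_2-r_1$. (One should check that the hypothesis $\theta_1+2\theta_2>-2$ is unaffected by the interchange, which it is, as it involves neither the $s$'s nor the $r$'s.)

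The second step is to combine the two available bounds for $I$ by taking their geometric mean, i.e. using $I\le\bigl[(\text{bound}_1)(\text{bound}_2)\bigr]^{1/2}$. In the generic regime $\theta_1+\theta_2>-1$ the two bounds are $I\le C(s_2-s_1)^{\alpha(\theta_1+\theta_2+1)/2}(r_2-r_1)^{\alpha(\theta_2+1)/2}$ and its mirror image; adding the two pairs of exponents and halving yields the common exponent $\tfrac{\alpha(\theta_1+2\theta_2+2)}{4}$ on both $s_2-s_1$ and $r_2-r_1$, which is exactly the claimed bound. In the regime $\theta_1+\theta_2<-1$ the two bounds are $I\le C(r_2-r_1)^{\alpha(\theta_1+2\theta_2+2)/2}$ and $I\le C(s_2-s_1)^{\alpha(\theta_1+2\theta_2+2)/2}$, whose geometric mean is again $C(s_2-s_1)^{\alpha(\theta_1+2\theta_2+2)/4}(r_2-r_1)^{\alpha(\theta_1+2\theta_2+2)/4}$. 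In the critical regime $\theta_1+\theta_2=-1$ the two logarithmically corrected bounds combine to give a factor $\bigl[(1+|\log(r_2-r_1)|)(1+|\log(s_2-s_1)|)\bigr]^{1/2}$, which is dominated by $1+|\log(r_2-r_1)|+|\log(s_2-s_1)|$ via $\sqrt{ab}\le\tfrac12(a+b)$, producing the stated form.

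There is essentially no obstacle beyond correctly verifying the symmetry in the first step; once this is established the corollary is a purely formal consequence of Lemma \ref{l.13}, requiring only the elementary arithmetic of the exponents and the inequality $\sqrt{ab}\le\tfrac12(a+b)$ for the logarithmic term. If one preferred a self-contained argument, an alternative would be to rerun the proof of Lemma \ref{l.13} with Hölder's inequality applied symmetrically in the $\rho$- and $\tau$-integrations, but the geometric-mean route above is shorter and reuses Lemma \ref{l.13} as a black box.
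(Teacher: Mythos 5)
Your proposal is correct and is essentially the paper's own argument: the paper likewise observes that Lemma \ref{l.13} applies with the roles of the $(\rho,s)$- and $(\tau,r)$-variables interchanged (phrased as ``in the same way we have'' the mirrored bound) and then combines the two bounds via $I\le b^{1/2}c^{1/2}$ whenever $I\le b$ and $I\le c$, which is exactly your geometric-mean step. Your explicit verification of the symmetry and the use of $\sqrt{ab}\le\tfrac12(a+b)$ for the logarithmic factor fill in details the paper leaves implicit, but the route is the same.
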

\begin{proof} Consider first the case $\theta_1+\theta_2<-1$. Denote the integral
on the left hand side of \eqref{e.the-integral}  by $I$.  Then
the inequality   \eqref{e.the-integral}
implies
\[
I\le C (r_2-r_1)^{\frac{\alpha(\theta_1+2\theta_2)}{2} + \alpha }\,.
\]
In the same way we have
\[
I\le C (s_2-s_1)^{\frac{\alpha(\theta_1+2\theta_2)}{2} + \alpha }\,.
\]
Now we use the fact that if three numbers satisfying
$a\le b$ and $a\le c$, then $a= a^{1/2}a^{1/2} \le b^{1/2}c^{1/2}$.
\[
I\le C (r_2-r_1)^{\frac{\alpha(\theta_1+2\theta_2)}{4} + \alpha/2 }
 (s_2-s_1)^{\frac{\alpha(\theta_1+2\theta_2)}{4} + \alpha/2 }
\]
which simplifies to \eqref{e.the-integral}.   The exactly the same argument
applied to the case $\theta_1+\theta_2=-1$ and the case
$\theta_1+\theta_2>-1$. Thus, the inequality \eqref{e.4.5} implies  \eqref{e.the-integral}.
\end{proof}

\begin{Lemma}\label{fromgr} Let $p_1,\cdots, p_n>0$.  Then for any $T>0$,
\begin{equation}
\int_{0\le s_1<\cdots<s_n\le T}  (s_n-s_{n-1})^{p_{n}-1}\cdots (s_2-s_1)^{p_2-1} s_1^{p_1-1}
ds=\frac{T^n\prod_{k=1}^n \Gamma(p_k)}{\Gamma(p_1+\cdots+p_n+1)}\,.
\end{equation}
\end{Lemma}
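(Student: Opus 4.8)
The plan is to establish the identity by iterating the classical Beta integral; equivalently, by induction on $n$. Write $P_k = p_1 + \cdots + p_k$ and let $J_n(T; p_1, \ldots, p_n)$ denote the integral on the left-hand side. For $n = 1$ the formula reads $\int_0^T s_1^{p_1-1}\,ds_1 = T^{p_1}/p_1 = T^{p_1}\Gamma(p_1)/\Gamma(p_1+1)$, so the power of $T$ on the right-hand side is $P_n = p_1 + \cdots + p_n$, and it is this version that I will prove. Since every $p_k > 0$, the integrand is positive on the simplex and all the one-dimensional integrals that occur below are finite, so Fubini's theorem justifies integrating one variable at a time.

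For the inductive step I would integrate out the innermost variable $s_1 \in (0, s_2)$: the substitution $s_1 = s_2 u$ gives
\[
\int_0^{s_2} s_1^{p_1-1}(s_2 - s_1)^{p_2-1}\,ds_1 = s_2^{p_1+p_2-1}\int_0^1 u^{p_1-1}(1-u)^{p_2-1}\,du = B(p_1, p_2)\, s_2^{p_1+p_2-1}.
\]
Substituting this back, the remaining $(n-1)$-fold integral over $0 < s_2 < \cdots < s_n < T$ has integrand $s_2^{(p_1+p_2)-1}(s_3 - s_2)^{p_3-1}\cdots (s_n - s_{n-1})^{p_n-1}$, which is exactly $J_{n-1}$ with parameter list $(p_1+p_2, p_3, \ldots, p_n)$. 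Hence $J_n(T; p_1, \ldots, p_n) = B(p_1, p_2)\, J_{n-1}(T; p_1+p_2, p_3, \ldots, p_n)$. Iterating this recursion down to $n = 1$ yields
\[
J_n(T) = B(p_1, p_2)\,B(P_2, p_3)\cdots B(P_{n-1}, p_n)\cdot \frac{T^{P_n}}{P_n}.
\]

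To finish, I would insert $B(a,b) = \Gamma(a)\Gamma(b)/\Gamma(a+b)$ and observe that the product of Beta functions telescopes,
\[
\prod_{k=2}^n \frac{\Gamma(P_{k-1})\Gamma(p_k)}{\Gamma(P_k)} = \frac{\prod_{k=1}^n \Gamma(p_k)}{\Gamma(P_n)},
\]
and then use $1/P_n = \Gamma(P_n)/\Gamma(P_n+1)$; combining these gives $T^{P_n}\prod_{k=1}^n \Gamma(p_k)/\Gamma(P_n+1)$, as asserted. An alternative one-shot route is to rescale $s_k = T u_k$, which pulls the factor $T^{P_n}$ out in front, and then to recognize the resulting integral over $\{0 < u_1 < \cdots < u_n < 1\}$ as a standard Dirichlet integral on the simplex. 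There is no genuine obstacle here: the statement is a classical computation, and the only points requiring attention are the convergence of each elementary integral — guaranteed by $p_k > 0$, since then every exponent exceeds $-1$ — and the bookkeeping in the telescoping product.
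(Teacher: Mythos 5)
Your proof is correct and complete, but it is worth noting that the paper does not actually prove this lemma at all: its ``proof'' is a one-line citation to formula 4.634 of Gradshteyn--Ryzhik, leaving the ``obvious transformations'' to the reader. Your inductive argument --- integrating out the innermost variable via the Beta integral $\int_0^{s_2}s_1^{p_1-1}(s_2-s_1)^{p_2-1}\,ds_1=B(p_1,p_2)\,s_2^{p_1+p_2-1}$, recognizing the result as $J_{n-1}$ with merged parameter $p_1+p_2$, and telescoping the product of Beta functions --- is a self-contained and entirely standard derivation of the Dirichlet--Liouville formula, and every step (convergence from $p_k>0$, the telescoping of $\prod_{k=2}^n\Gamma(P_{k-1})\Gamma(p_k)/\Gamma(P_k)$, and the final $1/P_n=\Gamma(P_n)/\Gamma(P_n+1)$) checks out. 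You were also right to flag and correct the misprint in the statement: the factor on the right-hand side should be $T^{p_1+\cdots+p_n}$, not $T^n$ (the two agree only when every $p_k=1$). This typo is harmless for the paper's application, where the lemma is used with $T$ fixed and the power of $T$ absorbed into the generic constant $C^n$, but your corrected version is the one that is actually true and the one the subsequent estimates implicitly rely on.
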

\begin{proof}This is well-known. For example it is straightforward consequence of
formula 4.634 of \cite{GR} with  some  obvious transformations.
\end{proof}

\begin{Lemma}\label{lem6} Assume that $u_0$ is bounded.  Then
\[
\sup_{x\in \mathbb{R }}\int_{\mathbb{R}^d}Z(t, x, \xi)u_0(\xi)d\xi \leq C\,.
\]
\end{Lemma}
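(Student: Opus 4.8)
The plan is to bound the integral $\int_{\mathbb{R}^d}Z(t,x,\xi)u_0(\xi)\,d\xi$ by splitting $Z$ into its two pieces from Proposition~\ref{Proposition5}, namely $Z(t,x,\xi)=Z_0(t,x-\xi)+V_Z(t,x,\xi)$, and using $|u_0(\xi)|\le \|u_0\|_\infty=:M$. Since $M$ and $t\le T$ are fixed, it suffices to show $\sup_{x}\int_{\mathbb{R}^d}|Z_0(t,x-\xi)|\,d\xi\le C$ and $\sup_{x}\int_{\mathbb{R}^d}|V_Z(t,x,\xi)|\,d\xi\le C$ uniformly for $t\in(0,T]$. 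By translation invariance in $\xi$, the first reduces to bounding $\int_{\mathbb{R}^d}|Z_0(t,y)|\,dy$, and likewise $V_Z$ is controlled by its pointwise bound in Proposition~\ref{Proposition5}, which depends on $x-\xi$ only.

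For the $Z_0$ term I would use the estimates in \eqref{e.ine.z0}. The strategy is to reduce to one dimension exactly as in Lemma~\ref{Corollary7}: the bound on $|Z_0(t,y)|$ is, up to constants, $t^{\text{(something)}}$ times a power $|y|^{2-d}$ (or a log factor for small $d$) times $p(t,y)$, and since $|y|^{2-d}\le\prod_i|y_i|^{(2-d)/d}$ and $p(t,y)\le\prod_i p(t,y_i)$ by the convexity/max arguments already used, the $d$-dimensional integral factors into a product of one-dimensional integrals of the type handled by Lemma~\ref{lem9} (with $\beta=(2-d)/d\in(-1,0]$ for $d\le 3$; for $d=2$ one needs the logarithmic variant Lemma~\ref{lema11}, and for $d=1$ simply $\beta=0$). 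Applying Lemma~\ref{lem9} to each factor gives $\int_{\mathbb{R}}|y_i|^{\beta}p(t,y_i)\,dy_i\le C t^{\frac{\alpha\beta}{2}+\frac{\alpha}{2}}$; multiplying the $d$ factors and combining with the prefactor of $t$ in \eqref{e.ine.z0}, the powers of $t$ should cancel (this is the point where one uses that $Z_0$ is the Green's function for the initial datum, so it is ``mass-preserving'' and scale-invariant), leaving a bound uniform in $t\in(0,T]$. The logarithmic cases $d=2$ (and any residual $|\log t|$ from Lemma~\ref{lema11}) are absorbed since $t$ ranges over the bounded interval $(0,T]$.

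For the $V_Z$ term I would repeat the same reduction using the explicit bound on $|V_Z(t,x,\xi)|$ in Proposition~\ref{Proposition5}: in each dimension it has the shape $C t^{(\cdot)}|x-\xi|^{\kappa}p(t,x-\xi)$ with $\kappa$ a spatial exponent that is strictly larger (less negative) than the one for $Y$ and still satisfies $\kappa/d>-1$ in the relevant dimensions, so the factorization $|x-\xi|^{\kappa}\le\prod_i|x_i-\xi_i|^{\kappa/d}$ and $p(t,x-\xi)\le\prod_i p(t,x_i-\xi_i)$ is legitimate, and again Lemma~\ref{lem9} (or Lemma~\ref{lema11} in the borderline dimensions) applies to each one-dimensional factor. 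The resulting power of $t$ will be nonnegative — indeed the exponents in the $V_Z$ estimate carry an extra $\gamma$-gain compared to $Z_0$ — so on $(0,T]$ the bound is again uniform. Summing the two contributions yields the claim.

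The main obstacle is the bookkeeping of the powers of $t$: one must check in each dimension $d$ (and in the log-borderline dimensions $d=2$ for $Z_0$, and $d=4$ for $V_Z$) that the negative power of $t$ multiplying the kernel is exactly compensated by the positive powers $t^{\frac{\alpha\beta}{2}+\frac{\alpha}{2}}$ produced by the $d$ one-dimensional spatial integrations, so that no negative power of $t$ survives and the supremum over $x$ stays finite as $t\downarrow 0$. A secondary point is to verify that the spatial exponents $\kappa_d/d$ and $(2-d)/d$ lie in $(-1,0]$ so that Lemma~\ref{lem9} is applicable; in the dimensions where this fails one falls back on the logarithmic Lemma~\ref{lema11}, whose extra $|\log t|$ factor is harmless on the bounded time interval.
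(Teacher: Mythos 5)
Your proposal is correct and follows essentially the same route as the paper: decompose $Z=Z_0+V_Z$, bound $u_0$ by its sup norm, and integrate the pointwise estimates \eqref{e.ine.z0} and those of Proposition~\ref{Proposition5} in $\xi$, checking that the surviving power of $t$ is nonnegative on $(0,T]$. The only (harmless) difference is mechanical: the paper evaluates the spatial integral by the single $d$-dimensional substitution $\xi=x+t^{\alpha/2}y$, which in particular disposes of the $d=2$ logarithm at once, whereas you factor into one-dimensional integrals and invoke Lemma~\ref{lem9} (resp.\ Lemma~\ref{lema11}) — both yield identical exponent bookkeeping.
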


\begin{proof}  We use
$Z(t, x, \xi)=Z_0(t, x-\xi)+V_Z(t, x, \xi)$.
Since $u_0$ is bounded,
\begin{eqnarray*}
\left|\int_{\mathbb{R}^d}Z_0(t, x, \xi)u_0(\xi)d\xi \right|
&\le& C\int_{\mathbb{R}^d}|Z_0(t, x, \xi) |d\xi
\end{eqnarray*}
which is  bounded  by the estimates in  \eqref{e.ine.z0}
and a substitution $\xi=x+t^{\frac{\alpha}{2}} y$.
In fact, we
have,  for example, when $d\ge 3$,
\begin{eqnarray*}
\int_{\mathbb{R}^d}|Z_0(t, x- \xi ) |d\xi
&\le& C \int_{\mathbb{R}^d}  t^{-\alpha}t^{\frac{(2-d)\alpha}{2}}t^{\frac{d\alpha}{2}}|y|^{2-d}\exp\{-\sigma|y|^{\frac{2}{2-\alpha}}\} dy \leq Ct^{1-\alpha}\leq C.
\end{eqnarray*}

Similarly,  using the estimation for  $V_Z(t, x, \xi)$ given in Proposition \ref{Proposition5}
we can bound $\int_{\RR^d} |V_Z(t, x, \xi)|d\xi$ by a constant. In fact, for example, when $d=3$, we have
\begin{eqnarray*}
\int_{\mathbb{R}^d}|V_Z(t, x, \xi) |d\xi
&\le& C t^{\frac{\gamma_0\alpha}{2}-\alpha} \int_{\mathbb{R}^d}  t^{\frac{3\alpha}{2}}t^{\frac{(\gamma-\gamma_0-1)\alpha}{2}}|y|^{\gamma-\gamma_0-1}\exp\{-\sigma|y|^{\frac{2}{2-\alpha}}\} dy \leq Ct^{\frac{\gamma\alpha}{2}}\leq C.
\end{eqnarray*}
The other dimension case can be dealt with the same  way.
\end{proof}



%
%
%

\section{ Proof of the main theorem \ref{main}.}

Change $t$ to $s$ and $x$ to $y$ and   the equation \eqref{e.2.5} for mild solution becomes
\[
u(s, y)=\int_{\mathbb{R}^d}Z(s, y,\xi)u_0(\xi)d\xi+\int_{0}^s\int_{\mathbb{R}^d}Y(s-r, y, z)u(r, z)W^H(dz )d r\,.
\]
Substituting  the above into \eqref{e.2.5}, we have
\begin{eqnarray*}
u(t,x)&=&
\int_{\mathbb{R}^d}Z(t,  x,\xi)u_0(\xi)d\xi+
\int_{0}^t\int_{\mathbb{R}^{2d} }Y(t-s, x, y)Z(s, y,\xi)u_0(\xi)d\xi W^H(dy )d s\\
&&\qquad
+\int_{0}^t\int_{0}^s\int_{\mathbb{R}^{2d} }Y(t-s, x, y) Y(s-r, y, z)u(r, z)W^H(dz )d rW^H(dy )d s\,.
\end{eqnarray*}
We continue to  iterate  this procedure to obtain
\begin{equation}
u(t, x)= \sum_{n=0}^{\infty} \Psi_n (t, x)\,,\label{e.5.6}
\end{equation}
where $\Psi_n$ satisfies the following recursive relation:
\begin{eqnarray*}
\Psi_0(t, x)&=&\int_{\mathbb{R}^d}Z(t, x, \xi)u_0(\xi)d\xi\\
\Psi_{n+1} (t, x)&=&\int_0^t\int_{\mathbb{R}^d}Y(t-s, x, y)\Psi_n (s, y) W^H(dy)d s\,,\quad n=0, 1, 2, \cdots\
\end{eqnarray*}
To write down the explicit expression for the expansion \eqref{e.5.6}, we denote
\begin{equation}
f_n(t, x;x_1,\cdots , x_n)=\int_{T_n}\int_{\mathbb{R}^{d}}Y(t-s_n, x, x_n)\cdots Y(s_2-s_1, x_2, x_1)Z(s_1, x_1, \xi)u_0(\xi)d\xi d \textbf{s}\,,\label{e.5.a7}
\end{equation}
where
\[
T_n={0< s_1<s_2< \cdots< s_n\leq t}  \quad
{\rm and }\quad
d\textbf{s}=ds_1ds_2\cdots ds_n\,.
\]
With these notations, we  see from the above  iteration procedure that
\begin{eqnarray}
\Psi_{n} (t, x)&=&I_n(\tilde{f}_n (t, x))\nonumber\\
&=&\int_{\mathbb{R}^{nd}}f_n(t, x;x_1,\cdots , x_n)  W^H(dx_1)  W^H(dx_2)\cdots   W^H(dx_n)\nonumber\\
&=& \int_{\mathbb{R}^{nd}}\tilde f_n(t, x;x_1,\cdots , x_n)  W^H(dx_1)  W^H(dx_2)\cdots   W^H(dx_n)\,. \label{e.5.7}
\end{eqnarray}
where $I_n$ denotes the multiple It\^o type integral with respect to
 $W(x)$ (see \cite{ref-journal2})   and
  $\tilde{f}_n(t, x;x_1,\cdots , x_n)$ is the symmetrization of $f_n(t, x;x_1,\cdots , x_n)$ with respect to $x_1,\cdots , x_n$:
\[
\tilde{f}_n(t, x;x_1,\cdots , x_n)=\frac{1}{n!}
\sum_{  i_1, \cdots, i_n\in \sigma(n)} f_n(t, x; x_{i_1},\cdots , x_{i_n}) \,,
\]
where $\sigma(n)$ denotes the set of permutations of $(1, 2, \cdots, n)$.

The expansion \eqref{e.5.6}  with  the explicit expression \eqref{e.5.7} for $\Psi_n$
is called the {\it chaos expansion of the solution}.

If the equation \eqref{1.4} has a square integrable solution,   then
it has a chaos expansion according to a  general theorem of It\^o.
From the above iteration procedure, it is easy to see that this chaos expansion
of the solution is given uniquely by  \eqref{e.5.6}-\eqref{e.5.7}.
This is the uniqueness.

If we can show that the series \eqref{e.5.6} is convergent in $L^2(\Omega, \mathcal{F}, P)$,
then it is easy to  verify that $u(t,x)$ defined by \eqref{e.5.6}-\eqref{e.5.7}
satisfies the equation \eqref{e.2.5}.   Thus, the existence of the solution to
\eqref{1.4} is solved and the explicit form of the solution is also given
(by \eqref{e.5.6}-\eqref{e.5.7}).  We refer to \cite{ref-journal2} for more detail.

Thus, our remaining   task   is to prove that  the series defined by \eqref{e.5.6} is convergent
in $L^2(\Omega, \mathcal{F}, P)$.  To this end, we   need to use the lemmas that we just proved.

Let now $u(t,x)$  be defined by \eqref{e.5.6}-\eqref{e.5.7}.
Then we have
\begin{eqnarray}
\EE[u(t, x)^2]
&=&\sum_{n=0}^\infty  \EE \left[I_n(\tilde f_n(t, x))\right]^2 \nonumber\\
&=&\sum_{n=0}^\infty n!\langle \tilde{f}_n, \tilde{f}_n\rangle_H\nonumber\\
&\le&
 \sum_{n=0}^\infty n!\langle f_n, f_n\rangle_H\,, \label{e.5.8}
\end{eqnarray}
where
\begin{equation}
\langle f, g\rangle_H=\int_{\mathbb{R}^{2nd}}\prod_{i=1}^n \varphi_H(u_i, v_i)f(u_1, \cdots, u_n)g(v_1, \cdots, v_n)d u_1dv_1du_2dv_2\cdots du_n dv_n
\label{e.5.9}
\end{equation}
and the last inequality follows from H\"older inequality.
Here and in the remaining part of the paper, we use the following notations:
\begin{eqnarray*}
u_i&=& (u_{i1}, \cdots, u_{id})\,,\quad du_i=du_{i1}\cdots du_{id}\,,\quad i=1, 2, \cdots, n\,; \\
\varphi_H(u_i, v_i)&=&\prod_{j=1}^{d}\varphi_{H_j}(u_{ij}, v_{ij})=\prod_{j=1}^{d}H_j(2H_j-1)|u_{ij}-v_{ij}|^{2H_{j}-2}\,.
\end{eqnarray*}

We   use  the   idea in \cite{ref-journal2} to estimate
each term $\Theta_n(t, x)=n!\langle f, f\rangle_H$ in the series \eqref{e.5.8}.
%
By the defining  formula \eqref{e.5.a7}   for  $f_n$ we have
\begin{eqnarray*}
\Theta_{n}(t, x)&=&n!\int_{T^2_n}\int_{\mathbb{R}^{2nd+2}}\prod_{k=1}^n
\varphi_H(\xi_i-\eta_i)\  Y(t-s_n, x, \xi_n)\cdots Y(s_2-s_1, \xi_2, \xi_1)
\\
&&\qquad
\cdot\int_{\mathbb{R}^d}Z(s_1, \xi_1,   \xi_0)u_0(\xi_0) d\xi_0  \cdot Y(t-r_n, x, \eta_n)\cdots Y(r_2-r_1, \eta_2, \eta_1)\\
&&\qquad
\cdot \int_{\mathbb{R}^d}Z(r_1, \eta_1,  \eta_0)u_0(\eta_0) d\eta_0  d\xi d\eta d\textbf{s}  d\textbf{r}.
 \end{eqnarray*}
Application of Lemma \ref{lem6} to the above integral yields
\begin{eqnarray*}
\Theta_{n}(t, x)
&\leq& C n!\int_{T^2_n}\int_{\mathbb{R}^{2nd}}\prod_{k=1}^n\varphi_H(\xi_i-\eta_i)Y(t-s_n, x, \xi_n)\cdots Y(s_2-s_1, \xi_2, \xi_1)\\
&&\qquad \cdot Y(t-r_n, x, \eta_n)\cdots Y(r_2-r_1, \eta_2, \eta_1) d\xi d\eta d\textbf{s}  d\textbf{r}.
 \end{eqnarray*}
Using  Lemma \ref{Corollary7} to the above integral,  we have
\begin{equation}\label{4.2}
\Theta_{n}(t, x)\leq C^n  n!\int_{T_n^2}\prod_{i=1}^d \Theta_{i, n}(t, x_i,\textbf{s},\textbf{r}) d\textbf{s} d\textbf{r},
\end{equation}
where
\begin{eqnarray*}
\Theta_{i, n}(t, x_i,\textbf{s},\textbf{r})
&=&\int_{\mathbb{R}^{2n}}\left\{\prod_{k=1}^n\varphi_{H_i}(\rho_k-\tau_k)\right\}
|t-s_n|^{\frac{\zeta_{d}}{d}}|x_i-\rho_n|^{\frac{\kappa_{d}}{d}}  p(t-s_n, x_i-\rho_n)  \\
&&\qquad  \cdots |s_2-s_1|^{\frac{\zeta_{d}}{d}}|\rho_2-\rho_1|^{\frac{\kappa_{d}}{d}}  p(s_2-s_1, \rho_2-\rho_1)\\
&&\qquad \cdot |t-r_n|^{\frac{\zeta_{d}}{d}}  |x_i-\tau_n|^{\frac{\kappa_{d}}{d}}  p(t-r_n, x_i-\tau_n)  \cdots|r_2-r_1|^{\frac{\zeta_{d}}{d}} \\
&&\qquad \cdot |\tau_2-\tau_1|^{\frac{\kappa_{d}}{d}}  p(r_2-r_1, \tau_2-\tau_1) d\rho d\tau .
\end{eqnarray*}
Here we use the notation $\rho_k=\xi_{ki}$ and $\tau_k=\eta_{ki}$, $k=1, \cdots, n$.
The quantity $\Theta_{i,n}$ can be written as
\begin{eqnarray}
\Theta_{i, n}(t, x_i,\textbf{s},\textbf{r})
&=&  |t-s_n|^{\frac{\zeta_{d}}{d}}  |t-r_n|^{\frac{\zeta_{d}}{d}}
  \cdots |s_2-s_1|^{\frac{\zeta_{d}}{d}}     |r_2-r_1|^{\frac{\zeta_{d}}{d}}
  \nonumber\\
&&\qquad \cdot \int_{\mathbb{R}^{2n}}\left\{\prod_{k=1}^n\varphi_{H_i}(\rho_k-\tau_k)\right\}
 |x_i-\rho_n|^{\frac{\kappa_{d}}{d}}  p(t-s_n, x_i-\rho_n)   \nonumber\\
&&\qquad  \cdot |x_i-\tau_n|^{\frac{\kappa_{d}}{d}}  p(t-r_n, x_i-\tau_n)  \cdots  |\rho_2-\rho_1|^{\frac{\kappa_{d}}{d}}  p(s_2-s_1, \rho_2-\rho_1)    \nonumber\\
&&\qquad
\cdots  |\tau_2-\tau_1|^{\frac{\kappa_{d}}{d}}  p(r_2-r_1, \tau_2-\tau_1) d\rho d\tau \,.\label{e.5.12}
\end{eqnarray}
From the definition \eqref{e.kappa}   of $\kappa_d$ we see easily
$
\frac{\kappa_{d}}{d}>-1\,. $\
We assume
\begin{equation}
2H_i +\frac{2\kappa_{d}}{d}>0\,.\label{e.Hi}
\end{equation}
Under the above condition  we can apply  the Corollary   \ref{c.14}   with  $\theta_1=2H_i-2>-1$, $\theta_2  =\frac{\kappa_{d}}{d}>-1$
 to the integration  $d\rho_1d\tau_1$ in  the expression  \eqref{e.5.12}
 (Condition \eqref{e.Hi} implies that $\theta_1+2\theta_2>-2$).  Then,
 when $\theta_1+\theta_2\not=-1$,  we have
\begin{eqnarray}
\Theta_{i, n}(t, x_i,\textbf{s},\textbf{r})
&\le &  C|t-s_n|^{\frac{\zeta_{d}}{d}}  |t-r_n|^{\frac{\zeta_{d}}{d}}
  \cdots |s_3-s_2|^{\frac{\zeta_{d}}{d}}     |r_3-r_2|^{\frac{\zeta_{d}}{d}}   \nonumber\\
&&\qquad \cdot |s_2-s_1|^{\frac{\zeta_{d}}{d}+\frac{H_id+\kappa_d}{2d}\alpha}     |r_2-r_1|^{\frac{\zeta_{d}}{d}+ \frac{H_id+\kappa_d}{2d}\alpha }
  \nonumber\\
&&\qquad \cdot \int_{\mathbb{R}^{2n-2}}\left\{\prod_{k=2}^n\varphi_{H_i}(\rho_k-\tau_k)\right\}
 |x_i-\rho_n|^{\frac{\kappa_{d}}{d}}  p(t-s_n, x_i-\rho_n)   \nonumber\\
&&\qquad  \cdot |x_i-\tau_n|^{\frac{\kappa_{d}}{d}}  p(t-r_n, x_i-\tau_n)  \cdots  |\rho_3-\rho_2|^{\frac{\kappa_{d}}{d}}  p(s_3-s_2, \rho_3-\rho_2)    \nonumber\\
&&\qquad
\cdots  |\tau_3-\tau_3|^{\frac{\kappa_{d}}{d}}  p(r_3-r_2, \tau_3-\tau_2) d\rho_n\cdots d\rho_2 d\tau_n\cdots d\tau_2 \,.\ \nonumber
\end{eqnarray}
Repeatedly applying this argument, we obtain
\begin{eqnarray}
\Theta_{i, n}(t, x_i,\textbf{s},\textbf{r})
&\le &  C^n \prod_{k=1}^n |t_{k+1}-t_k|^{\ell_i }    |s_{k+1}-s_k|^{\ell_i }    \,,
\end{eqnarray}
where we recall the convention that  $t_{n+1}=t$ and  $s_{n+1}=s$  and where
\begin{equation*}
\ell_i=\frac{\zeta_{d}}{d}+\frac{H_i d+\kappa_d}{2d}\alpha \,.
\end{equation*}
%
Substituting the above estimate of $\Theta_{i,n}$ into the expression for
$\Theta_n$,  we have
\begin{eqnarray*}
\Theta_n(t, x)
&\leq&  C^n \int_{T_n^2}\prod_{k=1}^{n} (s_{k+1}-s_k)^{ \ell  }(r_{k+1}-r_k)^{ \ell  } d\textbf{s} d\textbf{r}\\
&=& C^n \left[ \int_{T_n }\prod_{k=1}^{n} (s_{k+1}-s_k)^{ \ell  }  d\textbf{s}\right]^2\,,
\end{eqnarray*}
where
\begin{equation*}
\ell =\sum_{i=1}^d \ell_i= \zeta_{d} + \frac{|H|\alpha}{2}     +\frac{\kappa_d\alpha }{2 }   \quad {\rm with}\quad
|H|=\sum_{i=1}^d H_i\,.
\end{equation*}

Now, we apply Lemma \ref{fromgr} to obtain
\begin{eqnarray*}
\Theta_n(t, x)
&\leq&    C^n  \left[\frac{\Gamma(\ell+1) }{\Gamma(n(\ell+1))}  \right]^2\nonumber \\
&\leq&     \frac{C^n  }{\Gamma(2n(\ell+1))}   \,.
\end{eqnarray*}
This estimate combined with \eqref{e.5.8} proves that if
\begin{equation}
2(\ell+1)>1\,, \label{e.5.cond2}
\end{equation}
then  $\sum_{n=0}^\infty \Theta_n(t, x)$ is bounded which implies that the series
\eqref{e.5.6} is convergent in $L^2(\Omega, \mathcal{F}, P)$.

From  the explicit expressions of $\zeta_d$ and $\kappa_d$  we see by analyzing the condition
\eqref{e.5.cond2}  for the cases
$d=1$, $d=2$, $d=4$ and $d=3$ or $d\ge 5$ separately.  We see that
the condition \eqref{e.5.cond2}  is equivalent to
\begin{equation}
\sum_{i=1}^d H_i>d-2+\frac{1}{\alpha}\,.\label{e.5.17}
\end{equation}

When $\theta_1+\theta_2=-1$,  Corollary \ref{c.14} implies that for any $\varepsilon>0$,\begin{eqnarray*}
&&\int_{\mathbb{R}^2}|\rho_1-\tau_1|^{\theta_1}|\rho_2-\rho_1|^{\theta_2}|\tau_2-\tau_1|^{\theta_2}p(s_2-s_1, \rho_2-\rho_1)p(r_2-r_1, \tau_2-\tau_1)d\rho_1 d\tau_1\nonumber\\
&\leq&
C (s_2-s_1)^{\frac{\alpha( \theta_2+1+\varepsilon )}{4}  }(r_2-r_1)^{\frac{\alpha( \theta_2+1+
+\varepsilon)}{4}} \,.
 \end{eqnarray*}
Now we can follow  the above same argument to obtain that if
\begin{equation}
 2(\ell+1)>1\,,\label{e.5.18}
\end{equation}
where $\ell=\frac{d\epsilon+\kappa_d+d }{4}\alpha$,
then
$\Theta_n(t, x)$ is bounded.  In the same way as in the case $\theta_1+\theta_2\not=-1$,
we can show that the condition \eqref{e.5.17} implies \eqref{e.5.18}.

Now we consider the condition \eqref{e.Hi}.  From the definition
\eqref{e.kappa} of $\kappa_d$, we see that when $d=1,2, 3, 4$,
$H_i>1/2$ implies \eqref{e.Hi}.  When $d\ge 5$, then
the condition \eqref{e.Hi} is implied by the following
\[
H_i>1-\frac{2}{d}-\frac{\gamma}{2d}
\]
by choosing $\gamma_0$ sufficiently small.
%
%
%
Theorem 2 is then proved.\quad $\Box$.

\bibliographystyle{mdpi}
\makeatletter
\renewcommand\@biblabel[1]{#1. }
\makeatother



%


%

\end{document}